\newtheorem{theorem}{Theorem}[section]
\newtheorem{definition}[theorem]{Definition}
\newtheorem{example}[theorem]{Example}
\newtheorem{lemma}[theorem]{Lemma}
\newtheorem{comment}[theorem]{Remark}
\newcommand{\calM}{{\cal M}}
\newcommand{\I}{{\rm I}}
\newcommand{\E}{{\mathbb E}}
\newcommand{\len}{{\rm stage}}
\newcommand{\prob}{{\mathbb P}}
\newcommand{\val}{{\rm val}}
\newcommand{\Val}{{\rm Val}}
\newcommand{\dN}{{\mathbb N}}
\newcommand{\dR}{{\mathbb R}}
\newcommand{\calF}{{\cal F}}
\newcommand{\ep}{\varepsilon}
\title{Equilibria in Repeated Games with Countably Many Players and Tail-Measurable Payoffs%
\thanks{We would like to thank Abraham Neyman for his valuable comments. Ashkenazi-Golan and Solan
acknowledge the support of the Israel Science Foundation, grants
\#217/17 and \#722/18, and the NSFC-ISF Grant \#2510/17. This work has been partly supported by COST Action CA16228 European Network for Game Theory.}}
\author{Galit Ashkenazi-Golan,%
\thanks{School of Mathematical Sciences, Tel-Aviv University, Tel-Aviv, Israel, 6997800}
J\'{a}nos Flesch,%
\thanks{Department of Quantitative Economics,
Maastricht University, P.O.Box 616, 6200 MD, The Netherlands}
Arkadi Predtetchinski,%
\thanks{Department of Economics, Maastricht University, P.O.Box 616, 6200 MD,
The Netherlands}
Eilon Solan%
\thanks{School of Mathematical Sciences, Tel-Aviv University, Tel-Aviv, Israel, 6997800}}
\begin{document}

\maketitle

\noindent{\textbf Keywords:} repeated games, Nash equilibrium, countably many players, tail-measurable payoffs

\begin{abstract}
We prove that every repeated game with countably many players, finite action sets,
and tail-measurable payoffs admits an $\ep$-equilibrium,
for every $\ep >  0$.
\end{abstract}

\section{Introduction}

Nash equilibrium (Nash \cite{nash1950equilibrium}) is the most prevalent and important solution concept in game theory to date,
and is applied to a multitude of problems in, e.g., economic theory, computer science, political science, and life sciences (Holt and Roth
\cite{holt2004nash}).
It exists in one-shot normal-form games (games where the players choose their actions simultaneously) with finitely many players,
compact action sets, and bounded and jointly continuous payoff functions (Glicksberg \cite{glicksberg1952further}) and in extensive form finite games with perfect information, where the players move alternatingly (Kuhn \cite{kuhn1950extensive}). 
Gale \cite{gale1953theory} provided an algorithm to compute one such equilibrium in extensive form finite games with perfect information.

The conditions under which Nash equilibrium exists are tight:
two-player zero-sum games where the payoff function is not continuous may fail to have a Nash equilibrium,
and even an $\ep$-equilibrium for $\ep > 0$ sufficiently small,
like the choose-the-largest-integer game;
and the same holds for games with countably many players and finite action sets
(see Peleg \cite{peleg1969equilibrium}, Voorneveld \cite{voorneveld2010possibility}, and Section~\ref{sec2}.\ref{subsecn.example} below).

When the interaction is repeated,
the payoff depends on the whole play,
and new phenomena arise, 
like learning, reciprocity, and cooperation. 
These phenomena were studied in various disciplines,
like 
economics (e.g., Fudenberg and Maskin \cite{fudenberg1986the} and Mailath and Samuelson \cite{mailath2006repeated}),
mathematics (e.g., Aumann and Maschler \cite{aumann1995repeated}, and
Martin \cite{martin1975borel}, \cite{martin1998determinacy}), 
and evolution (e.g., Press and Dyson \cite{press2012iterated}, Stewart and  Plotkin (\cite{stewart2012extortion}, \cite{stewart2013extortion}), Hilbe et. al. \cite{hilbe2013evolution}, Hilbe et. al. \cite{hilbe2014cooperation}, or McAvoy and Hauert \cite{mcavoy2016autocratic}).
The concept of Nash equilibrium has been studied in more general classes of infinite-stage games, like stochastic games
(e.g., Shapley \cite{shapley1953stochastic}, Solan and Vieille \cite{solan2015stochastic}), 
differential games (e.g., Isaacs \cite{isaacs1999differential}),
and mean field games (e.g., Lasry and Lyons \cite{lasry2007mean} and Huang, Malham\'e and Caines \cite{huang2006large}).

In a game that is infinitely repeated, whether an $\ep$-equilibrium exists is  known only in few cases,
such as when the payoff is a continuous function of the play,
or when the payoff is the limit of the averages of stage payoffs.

In this paper we extend the equilibrium existence result in two ways.
First, we allow the payoff function to be tail-measurable.
Second, we allow the set of players to be countable
(for models with countably many players, see, e.g., \cite{bala1998learning} or \cite{acemoglu2011bayesian}).
Thus, interestingly, even though one-shot games with countably many players and finite action sets may fail to have an $\ep$-equilibrium,
repeated games with countably many players, finite action sets, and tail-measurable payoffs do have an $\ep$-equilibrium.

To prove the result we introduce a new technique to the study of repeated games with general payoff functions,
which combines Martin's method \cite{martin1998determinacy} for Borel games and Blackwell games
with a method developed by Solan and Vieille \cite{solan2002correlated} for stochastic games.

The structure of the paper is as follows. In Section \ref{sec2}, we introduce the model and the existence result, and discuss an illustrative example. In Section \ref{sec3}, we prove the main result.
In Section~\ref{secn.extension} we discuss extensions.
In Section~\ref{sec4} we provide concluding remarks.

\section{Model and Main Result}\label{sec2}

\subsection{The Model}

\begin{definition}\label{def-game}
A \emph{repeated game with countably many players} is a triplet
$\Gamma = (I,(A_i)_{i \in I}, (f_i)_{i \in I})$, where
\begin{itemize}
\item   $I = \dN = \{0,1,2,\ldots\}$ is a countable set of players.
\item   $A_i$ is a nonempty and finite action set for player~$i$, for each $i \in I$. Let $A = \prod_{i \in I} A_i$ denote the set of action profiles. 
\item   $f_i : A^\dN \to \dR$ is player~$i$'s payoff function, for each $i \in I$.
\end{itemize}
\end{definition}

The game is played in discrete time as follows. In each stage $t \in \dN$,
each player $i \in I$ selects an action $a_i^t \in A_i$, independently of the other players. This induces an action profile $a^t=(a_i^t)_{i\in I}$ in stage $t$, which is observed by all players. Given the realized play $p = (a^0,a^1,\ldots)$, the payoff to each player~$i$ is $f_i(p)$.\medskip

\noindent\textbf{Histories and plays.} A \emph{history} is a finite sequence of elements of $A$, including the empty sequence. The set of histories is denoted by $H = \bigcup_{t=0}^\infty A^t$.
The current stage (or length) of a history $h\in A^t$ is denoted  by $\len(h)=t$.

A \emph{play} is an infinite sequence of elements of $A$. The set of plays is thus $A^\dN$. For a play $p \in A^\dN$ write $h^t(p)$ to denote the prefix of $p$ of length $t$. 
\medskip

\noindent \textbf{Measure theoretic setup.} We endow the set $A$ of action profiles, the sets $A^t$ for $t\geq 2$, and the set $A^\dN$ of plays with the product topology, where the action set $A_i$ of each player $i\in I$ has its natural discrete topology. We denote the corresponding Borel sigma-algebras by $\mathcal{F} (A)$, $\mathcal{F} (A^t)$, and $\mathcal{F} (A^\dN)$. These coincide with the respective product sigma-algebras, given the discrete sigma-algebra on each $A_i$. A payoff function $f_i$ is called \emph{product-measurable} if it is measurable w.r.t.~$\mathcal{F} (A^\dN)$.
\medskip

\noindent\textbf{Strategies.}
A \emph{mixed action} for player $i$ is a probability measure $x_i$ on $A_i$.
The set of mixed actions for player $i$ is denoted by $X_i$.
A \emph{mixed action profile} is a vector of mixed actions $x = (x_i)_{i \in I}$, one for each player.
The set of mixed action profiles is denoted by $X$.

A (behavior) \emph{strategy} of player~$i$ is a function $\sigma_i : H \to X_i$ that satisfies the following measurability condition: for each stage $t\in\dN$ and each action $a_i\in A_i$, the mapping from $A^t$ to $[0,1]$ given by $h\in A^t\mapsto\sigma_i(h)(a_i)$ is measurable. We denote by $\Sigma_i$ the set of strategies of player~$i$.

A \emph{strategy profile} is a vector of strategies $\sigma = (\sigma_i)_{i \in I}$, one for each player. We denote by $\Sigma$ the set of strategy profiles.\medskip

\noindent\textbf{Strategy profiles with finite support.}
A mixed action profile $x$ is said to have \emph{finite support} if only finitely many players randomize in $x$, i.e., all players except finitely many play a pure action with probability 1. We denote by $X^F$ the set of mixed action profiles with finite support.

A strategy profile $\sigma$ is said to have \emph{finite support} if $\sigma(h) \in X^F$ for every history $h\in H$.
The set of players who randomize may be history dependent,
and its size may not be bounded.
We denote by $\Sigma^F$ the set of strategy profiles with finite support.\medskip

\noindent\textbf{Definitions for the opponents of a player.}
For $i\in I$ let $-i$ denote the set of its opponents, $I \setminus \{i\}$. We can similarly define mixed action profiles $x_{-i} = (x_j)_{j\neq i}$, strategy profiles $\sigma_{-i} = (\sigma_j)_{j\neq i}$, mixed action profiles with finite support, and strategy profiles with finite support. The corresponding sets are denoted by $X_{-i}$, $\Sigma_{-i}$, $X_{-i}^F$, and $\Sigma_{-i}^F$, respectively.\medskip

\noindent\textbf{Expected payoffs.} For each history $h\in H$ the mixed action profile $\sigma(h)=(\sigma_i(h))_{i\in I}$ induces a unique probability measure $\mathbb{P}_{\sigma(h)}$ on $(A,\mathcal{F} (A))$ by Kolmogorov's extension theorem, and then $\sigma$ induces a unique probability measure $\mathbb{P}_\sigma$ on $(A^\dN,\mathcal{F} (A^\dN))$ by the Ionescu-Tulcea theorem. If player $i$'s payoff function $f_i$ is bounded and product-measurable, then
player $i$'s expected payoff under $\sigma$ is 
\[\E_{\sigma}[f_i]\,=\,\int_{A^\dN}f_i(p)\ \mathbb{P}_\sigma(dp).\]
In that case, player $i$'s expected payoff under $\sigma$ in the subgame starting at a history $h$, denoted by $\E_{\sigma}[f_i| h]$, is defined similarly.\medskip

\noindent\textbf{Equilibrium.} Consider a game with countably many players and bounded and product-measurable payoffs. Let $\vec \ep=(\ep_i)_{i\in I}$, where $\ep_i\geq 0$ for each player $i\in I$. A strategy profile $\sigma^*$ is called an \emph{$\vec \ep$-equilibrium}, if for every player $i \in I$ and every strategy $\sigma_i \in \Sigma_i$,
we have $\E_{\sigma^*}[f_i] \,\geq\, \E_{(\sigma_i,\sigma^*_{-i})}[f_i] - \ep_i$. 

\subsection{Tail Measurability}\label{Sec.Tail}

A set $B \subseteq A^\dN$ of plays is called \emph{tail} if whenever $p = (a^0,a^1,a^2,\dots) \in B$
and $p' = (a'^0,a'^1,a'^2,\dots)$ satisfies $a'^t = a^t$ for every $t$ sufficiently large, we have $p'\in B$. Intuitively,
a set $B \subseteq A^\dN$ is tail if changing finitely many coordinates does not change the membership relation for $B$.
The tail sets of $A^\dN$ form a sigma-algebra, denoted by $\calF^{Tail}$.
The sigma-algebras $\calF^{Tail}$ and $\calF(A^\dN)$ are not related: neither of them includes the other (as soon as infinitely many players have at least two actions); see Rosenthal \cite{rosenthal1975nonmeasurable} and Blackwell and Diaconis \cite{blackwell1996non}. A function from $A^\dN$ to $\dR$ is called \emph{tail-measurable} if it is measurable with respect to $\calF^{Tail}$.

Many evaluation functions in the literature of dynamic games are tail-measurable.
For example,
the lim\,sup of the average stage payoffs
(see, e.g., Mertens and Neyman \cite{mertens1981stochastic,mertens1982stochastic})
and the lim\,sup of stage payoffs
(see, e.g., Maitra and Sudderth \cite{maitra1993borel})
are both tail-measurable.
The discounted payoff
(see, e.g., Shapley \cite{shapley1953stochastic})
is not tail-measurable.

Various classical winning conditions that are studied in the computer science literature are also tail-measurable, even though they are usually expressed in terms of a state space, such as the B\"uchi, co-B\"uchi, parity, Streett, and M\"uller winning conditions (see, e.g., Horn and Gimbert \cite{Horn2008optimal}, and Chatterjee and Henzinger \cite{chatterjee2012survey}).

An iterative way to construct tail-measurable functions from $A^\dN$ to $\dR$ is the following:
If $(f_k)_{k \in \dN}$ is a sequence of tail-measurable functions from $A^{\dN}$ to $\dR$,
and if $\phi : \dR^\infty \to \dR$ is a function,
then the function $f(p) = \phi(f_0(p),f_1(p), f_2(p),\ldots)$ is tail-measurable.

A closely related property is shift-invariance, cf.~Chatterjee \cite{chatterjee2007concurrent}.
A set $B \subseteq A^\dN$ of plays is called shift-invariant
if whenever $(a^0,a^1,a^2,\ldots) \in B$ we have $(a^1,a^2,a^3,\ldots)\in B$ and $(a',a^0,a^1,a^2,\ldots)\in B$
for every $a'\in A$.
As one can verify, every shift-invariant set is tail, and thus every shift-invariant function from $A^\dN$ to $\dR$ is also tail-measurable.
The converse is not true: For $A=\{0,1\}$, consider the set $B$ of those sequences in $A^\dN$ that, at even coordinates,
contain only finitely many 1's  (with no restriction at odd coordinates). The set $B$ is tail but not shift invariant.

\subsection{The Main Result}

We now present the main result of the paper.

\begin{theorem}
\label{theorem:1}
Consider a repeated game with countably many players. If each player's payoff function is bounded, product-measurable, and tail-measurable, then the game admits an $\vec \ep$-equilibrium for each $\vec \ep=(\ep_i)_{i\in I}$, where $\ep_i>0$ for each $i\in I$.
\end{theorem}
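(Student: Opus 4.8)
The plan is a folk-theorem-type construction: extract from a determinacy theorem a credible punishment level for each player; build a ``reward'' strategy profile whose continuation payoffs clear all these levels at essentially every history; and then combine the two, reverting to punishment of a player as soon as a unilateral deviation of that player is detected. Tail-measurability is used at almost every turn, as it makes the punishment levels and the reward profile insensitive to any finite prefix of the play, so that each subgame can be treated as a copy of the whole game.

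\noindent\textbf{Step 1 (credible punishment levels).} Fix a player~$i$ and consider the two-person zero-sum game $\Gamma_i$ in which player~$i$ chooses $\sigma_i\in\Sigma_i$ so as to maximize $f_i$, while a single adversary, who controls all the players in $-i$, chooses $\sigma_{-i}\in\Sigma_{-i}$ so as to minimize $f_i$. Since $f_i$ is bounded and product-measurable, hence Borel, Martin's determinacy theorem for Blackwell games---in a form covering the countably many opponents of player~$i$, where the relevant action spaces are countable products of finite sets---gives $\Gamma_i$ a value $v_i\in\dR$, together with, for each $\delta>0$, a strategy of player~$i$ that guarantees at least $v_i-\delta$ and a strategy $\overline{\sigma}^i_{-i}\in\Sigma_{-i}$ of the adversary that guarantees at most $v_i+\delta$. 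Because $f_i$ is tail-measurable, the value of the subgame of $\Gamma_i$ after any finite history is again $v_i$ (a tail-measurable payoff does not see the finite prefix); thus $v_i$ is a single, history-independent punishment level, and $\overline{\sigma}^i_{-i}$ may be chosen so as to hold player~$i$ to at most $v_i+\delta$ from every history onward. The adversary's strategy, which a priori correlates the moves of the players in $-i$, can be reproduced by those players in $\Gamma$ since they all observe the history and, provided at least two of them have at least two actions, can produce common randomness by jointly controlled lotteries; the residual degenerate cases, in which all but finitely many players have a singleton action set, reduce to finite-player games and are easier, so we may assume that infinitely many players have at least two actions.

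\noindent\textbf{Step 2 (a uniformly individually rational reward profile).} The heart of the proof is to produce a strategy profile $\sigma^*\in\Sigma^F$ with the property that $\E_{\sigma^*}[f_i\mid h]\ge v_i-\ep_i/4$ for every player~$i$ and every history $h$ with $\Prob_{\sigma^*}(h)>0$, except on a set of histories of total probability less than~$\delta$. This is where Martin's auxiliary-game method and the fixed-point method of Solan and Vieille are combined: one approximates $\Gamma$ by finite truncations (for instance, games in which only players $0,\dots,n$ act strategically, the rest being frozen on fixed pure actions); in each truncation one obtains, by a Kakutani-type fixed-point argument in the spirit of Solan and Vieille and with the observed history serving as a correlation device, a consistent system of continuation-payoff vectors and mixed-action prescriptions that are individually rational relative to the $v_i$; and one patches these finite objects together into $\sigma^*$ by a diagonal procedure as $n\to\infty$. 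Tail-measurability is used essentially throughout, in particular to reduce the task to finding a single consistent continuation-value vector rather than one per history, and to control the passage to the limit. I expect this step to be the main obstacle: for finitely many players the set of individually rational feasible payoffs is automatically non-empty, whereas with countably many players this set may already be empty at the one-stage level, so forcing a single profile's continuation payoffs to clear all countably many thresholds~$v_i$ at once is precisely the new difficulty---and it is what makes the combination of the two methods necessary.

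\noindent\textbf{Step 3 (assembly and verification).} The equilibrium candidate is the profile in which the players follow $\sigma^*$ as long as the observed play is consistent with it and, the moment the play becomes inconsistent with $\sigma^*$ in a way attributable to a single player~$i$, all other players switch permanently to $\overline{\sigma}^i_{-i}$. A deviation from a mixed action cannot be identified within one stage, so ``attributable to player~$i$'' is made precise by a statistical test: the stages are grouped into consecutive blocks and, within each block, the empirical frequencies of player~$i$'s actions are compared with the prescribed mixed actions, an alarm being raised only when the discrepancy exceeds a threshold. Because the $f_i$ are tail-measurable, the blocks may be taken as long, and the test started as late, as one wishes without affecting payoffs; the test is designed so that a player who follows $\sigma^*$ triggers an alarm with probability at most~$\delta$, while for any deviation $\sigma_i$, conditional on no alarm ever sounding, the law of the play differs from its law under $\sigma^*$ by at most~$\delta$ in total variation. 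The verification is then routine: for a deviation $\sigma_i$ of player~$i$, with the probability that the alarm eventually sounds the punishment $\overline{\sigma}^i_{-i}$ takes over from some finite stage on, so by Step~1 and by tail-measurability (which discards the pre-punishment phase) player~$i$'s conditional expected payoff is at most $v_i+\delta$; with the remaining probability her conditional expected payoff is within~$\delta$ of $\E_{\sigma^*}[f_i]$, which by Step~2 is at least $v_i-\ep_i/4$. Her deviation payoff is therefore a convex combination of $v_i+\delta$ and $\E_{\sigma^*}[f_i]+\delta$, both of which are at most $\E_{\sigma^*}[f_i]+\ep_i$ once $\delta$ is small relative to $\ep_i$; hence no deviation of player~$i$ improves her expected payoff by more than $\ep_i$, and the candidate profile is an $\vec\ep$-equilibrium.
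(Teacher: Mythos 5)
Your skeleton coincides with the paper's high-level plan (punish each player at a minmax-type level, find a reward path whose payoffs clear all these levels, enforce it by grim trigger), but both steps that carry the real mathematical weight are asserted rather than proved. In Step 1 you invoke Blackwell determinacy for the zero-sum game between player $i$ and the coalition $-i$ ``in a form covering the countably many opponents''; no such off-the-shelf form is available here. The coalition's stage action set $A_{-i}$ is an infinite product, coalition strategies must satisfy a measurability requirement, and---decisively---the auxiliary one-shot payoff functions $r:A\to[0,1]$ that arise inside Martin's construction need not be $\calF(A)$-measurable, so the classical minmax of these auxiliary games is not even well defined once infinitely many opponents randomize. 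This is precisely why the paper replaces the classical minmax by the \emph{finitistic} minmax $\Val_i^F$ (opponents restricted to finite-support profiles), for which Martin's argument can be pushed through (Theorem \ref{martin-for-surr}) and which still serves as a punishment level simply by definition of the infimum; note also that no correlation and hence no jointly controlled lotteries are needed for punishment, since the infimum in the definition is already over independent profiles.

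Step 2 is the actual content of the theorem, and you leave it as a sketch that you yourself flag as the principal obstacle: the truncation-plus-Kakutani-plus-diagonalization scheme is not carried out, and it is exactly in the passage to the limit that one must keep the continuation payoffs of a single profile above all countably many thresholds simultaneously. The paper does something different and concrete. For each player $i$ it constructs (Theorem \ref{theorem:martin}, via a re-initiation of the Martin function) a map $d_i:H\to[0,1]$ with the property that any finite-support profile $\sigma$ satisfying $\E_{\sigma(h)}[d_i^h]\geq\val_i^F(d_i^h)$ at every history yields $\E_\sigma[f_i\mid h]\geq\Val_i^F(\Gamma^h)-\delta$. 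Then, at every stage-$t$ history only players $0,\dots,t$ are active while the others are frozen at default actions---harmless for player $i$ because tail-measurability makes her payoff insensitive to the first $i$ stages---and the active players play an ordinary Nash equilibrium of the finite one-shot game with payoffs $d_i^h$. A submartingale argument plus L\'evy's 0-1 law then gives $f_i\geq\Val_i^F(\Gamma)-\tfrac{\ep_i}{2}$ almost surely for every $i$, whence a single \emph{pure} play $p$ clearing all thresholds exists. Because the equilibrium path is this pure play, deviations are detected instantly, and your entire Step-3 apparatus of block statistical tests (whose total-variation claim would itself require proof) is unnecessary. In short, the architecture is right, but the two components that constitute the proof---a minmax notion for which Martin's method remains valid with countably many players, and the staggered-activation construction of the reward play---are missing from your argument.
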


We show the existence of an $\vec \ep$-equilibrium with a simple structure: The players are supposed to follow a play, that is, each player is supposed to play a fixed sequence of actions. If a player deviates, her opponents switch to a strategy profile with finite support to punish the deviator. 
Thus, it never happens that, on or off the equilibrium play, infinitely many players randomize at the same stage. 


A 0-equilibrium does not always exist under the condition of Theorem \ref{theorem:1}, not even in games with only one player (the other players can be treated as dummy). Indeed, suppose that this player has two actions 0 and 1. For each play $p=(a^0,a^1,\ldots)$, let $\zeta(p)$ denote the limsup-frequency of action 1: $\zeta(p)=\limsup_{t\to\infty}(a^0+\cdots+a^t)/(t+1)$. Define the payoff to be $\zeta(p)$ if $\zeta(p)<1$ and $0$ if $\zeta(p)=1$. This payoff function is tail-measurable, but the player cannot obtain payoff 1.

The example of the next section illustrates the main ideas of the construction of $\vec \ep$-equilibria for Theorem \ref{theorem:1}.

\subsection{An illustrative example}\label{subsecn.example}

Voorneveld \cite{voorneveld2010possibility} studied the following one-shot game with countably many players, where each player would like to choose the action that is chosen by the minority of the players:
\begin{itemize}
\item   The set of players is $I = \dN$.
\item   The action set of each player $i \in I$ is $A_i = \{0,1\}$.
\item   The payoff of each player~$i\in I$ is 
\begin{equation}
\label{equ:1}
r_i(a) = \left\{
\begin{array}{ll}
1, & \ \ \hbox{if } a_i = 0 \hbox{ and } \limsup\limits_{n \to \infty} \frac{1}{n}\sum_{j<n} a_j > \tfrac{1}{2},\\[0.3cm]
1, & \ \ \hbox{if } a_i = 1 \hbox{ and } \limsup\limits_{n \to \infty} \tfrac{1}{n}\sum_{j<n} a_j \leq \tfrac{1}{2},\\[0.1cm]
0, & \ \ \hbox{otherwise.}
\end{array}
\right.
\end{equation}
\end{itemize}

Voorneveld \cite{voorneveld2010possibility} showed that this one-shot game has no $\vec\ep$-equilibrium, provided $\ep_i\in[0,1/2)$ for every $i$.
Indeed, suppose by way of contradiction that $\sigma^*$ is an $\ep$-equilibrium.
The set of action profiles
\[B = \Big\{a\in A\colon \limsup_{n \to \infty} \tfrac{1}{n}\sum_{j<n} a_j > \tfrac{1}{2}\Big\} \subseteq \prod_{i\in I} A_i\]
is a tail set. Hence, by Kolmogorov's 0-1 law, either $\prob_{\sigma^*}(B)=0$ or $\prob_{\sigma^*}(B)=1$.
In the former case, choosing action 0 gives payoff 0, while choosing action 1 gives payoff 1,
and therefore the strategy $\sigma^*_i$ of each player $i$ chooses action 1 with probability at least $1 - \ep_i > 1/2$, which contradicts $\prob_{\sigma^*}(B)=0$. The argument is analogous for the latter case. Hence, $\sigma^*$ cannot be an $\vec\ep$-equilibrium.

Consider now the following repeated version $\Gamma$ of Voorneveld's game:
\begin{itemize}
\item   The set of players is $I = \dN$.
\item   The action set of each player $i \in I$ is $A_i = \{0,1\}$.
\item   The payoff of each player~$i\in I$
is equal to 1 if $r_i(a^t) = 1$ for all sufficiently large stages $t$, and it is equal to $0$ otherwise,
where $r_i$ is defined in \eqref{equ:1}.
\end{itemize}
Thus, player~$i$ wins in $\Gamma$ if she wins the one-shot Voorneveld game in all but finitely many stages.
The payoff function of this game is product-measurable and tail-measurable, and
hence the game satisfies the conditions of Theorem~\ref{theorem:1}.

Even though the one-shot Voorneveld game has no $\ep$-equilibrium for small $\ep>0$,
according to Theorem~\ref{theorem:1} the repeated game $\Gamma$ does have an $\ep$-equilibrium for every $\ep > 0$.
In fact, the following strategy profile is a pure 0-equilibrium in the game $\Gamma$: each player~$i$ plays action 0 in stages $t=0,1,\dots,i-1$ and action 1 in all stages $t\geq i$. Under this strategy profile, in each stage only finitely many players play action 1, and hence each player $i$ wins the one-shot Voorneveld game in all stages $t\geq i$,
i.e., $r_i(a^t) = 1$ for each $t\geq i$. Thus, under this strategy profile, each player's payoff is 1 in the game $\Gamma$.

This example illustrates the general construction.
Given $\ep > 0$, we will identify a play $p\in A^\dN$ such that $f_i(p)$ is at least player~$i$'s minmax value minus $\ep$, for each $i \in I$.
To find such a play $p$, the key step is to construct a strategy profile satisfying the following property:
each player $i$ plays arbitrarily in the first $i-1$ stages, and afterward she plays in a way that guarantees her a payoff of at least her minmax value minus $\ep$ with probability 1.
This way, in every stage, only finitely many players are ``active''.
Such a strategy profile will be constructed using techniques that were developed by Martin \cite{martin1998determinacy}.
From the existence of such a strategy profile it will follow that a play $p$ with the desired property does exist.

Based on the play $p$, we can define a $2\ep$-equilibrium: The players are supposed to follow $p$. If a player deviates from $p$, then she is punished,
i.e., her opponents switch to a strategy profile which ensures that player $i$'s payoff is at most her minmax value plus $\ep$.

To apply the results of Martin \cite{martin1998determinacy}, 
we cannot allow countably many players to randomize at the same stage.
We therefore do not use the classical minmax value of player $i$, where all opponents may randomize their actions, but rather we will define a new concept, the \emph{finitistic minmax value} of player $i$,
where the opponents of player~$i$ are restricted to strategy profiles with finite support. 

\section{The Proof}\label{sec3}
Section \ref{sec:minmax} introduces the new notion of finitistic minmax value, designed specifically for games with countably many players. We discuss the new notion and compare it to the ``classical'' minmax value in one-shot as well as in repeated games. Sections~\ref{sec:surr} and \ref{sec:suff} are devoted to two auxiliary results, both on the finitistic minmax value. The first of these results is largely based on the arguments of Martin \cite{martin1998determinacy} and Maitra and Sudderth \cite{maitra1998finitely} and 
relates the finitistic minmax value with
an auxiliary game of perfect information used to establish determinacy of Blackwell games. 
The second result fine-tunes the first, with a goal of obtaining  continuation values that are good in each subgame. The final subsection, \ref{section:prooff}, brings all the arguments together and completes the proof of Theorem \ref{theorem:1}.

\subsection{Two Notions of Minmax}\label{sec:minmax}
In this subsection we discuss two notions of the minmax value that are important for the proof: the classical minmax value and the so-called finitistic minmax value.\medskip   

\noindent\textbf{The minmax value in one-shot games.} Consider a one-shot game $G=(I,(A_i)_{i\in I},(r_i)_{i\in I})$, where $I=\dN$ is a countable set of players, 
and for each $i \in I$, $A_i$ is a nonempty and finite action set of player $i$ and $r_i:A\to\dR$ is her payoff function. A strategy of each player $i$ is an element of $X_i$. 

Consider a player $i \in I$. Assume that her payoff function $r_i$ is bounded and $\mathcal{F}(A)$-measurable. This ensures that each strategy profile $x=(x_i)_{i\in I}$ induces an expected payoff $r_i(x)$ for player $i$. In the game $G$, the (classical) \emph{minmax value} of player $i$ is defined as
\begin{equation}\label{val-oneshot}
\val_i(r)\,=\,\inf_{x_{-i}\in X_{-i}}\sup_{x_i\in X_i} r_i(x_i,x_{-i}).
\end{equation}
Since $A_i$ is finite, we can replace $\sup_{x_i\in X_i}$ by $\max_{a_i\in A_i}$. 
The minmax value can be interpreted as the highest payoff that player $i$ can defend against any strategy profile of her opponents.

The classical minmax value is not always well defined when $r_i$ is not $\mathcal{F}(A)$-measurable, yet payoff functions arising in our proof are not guaranteed to be $\mathcal{F}(A)$-measurable. For this reason we introduce a related notion, the so-called finitistic minmax value. This notion imposes that all but finitely many of $i$'s opponents play a pure strategy. As a consequence, it is well defined whether or not the payoff function is $\mathcal{F}(A)$-measurable.

The relationship between the classical and the finitistic versions of the minmax value is explored in Section \ref{secn.extension}. In fact, we show (Theorem \ref{minmax-oneshot}) that the two notions coincide for any function $r_{i}$ that is bounded and $\mathcal{F}(A)$-measurable. Thus, for the opponents of player $i$ to minimize player $i$'s expected payoff, only finitely many of them need to randomize. As a consequence of this result, the finitistic minmax value can be seen as an extension of the classical minmax value to a larger class of payoff functions.  

We note that in \eqref{val-oneshot} we cannot replace the infimum by minimum. Indeed, suppose that $i=0$, player $0$ has a single action, while all other players have two actions: $a$ and $b$. Define $r_0$ to be $1/n$ if player $n>0$ is the only opponent who plays action $b$, and define $r_0$ to be 1 otherwise. Then player 0's minmax value and finitistic minmax value are equal to 0, but player 0's payoff is never 0.\medskip

\noindent\textbf{The minmax value in repeated games.} Let $\Gamma$ be a repeated game with countably many players. Assume that player $i$'s payoff function $f_i$ is bounded and product-measurable. Then, in the game $\Gamma$ the \emph{minmax value} of player $i$ is defined as
\begin{equation}\label{minmax-rep} 
\Val_i(\Gamma)=\inf_{\sigma_{-i} \in \Sigma_{-i}}\,\sup_{\sigma_i \in \Sigma_i}\E_{(\sigma_i,\sigma_{-i})}[f_i]. 
\end{equation}
Note our use of ``\val'' for the minmax value in one-shot games, and of ``\Val'' for the minmax value in repeated games: the parameter of the operator $\val_i$ is player~$i$'s payoff function, while the parameter of the operator $\Val_i$ is the whole game.

Still assuming $f_i$ to be bounded and product-measurable, in the game $\Gamma$ define the \emph{finitistic minmax value} of player $i$ as
\begin{equation}\label{surr-minmax-rep}
\Val_i^{F}(\Gamma) = \inf_{\sigma_{-i} \in \Sigma_{-i}^F}\,\sup_{\sigma_i \in \Sigma_i}\E_{(\sigma_i,\sigma_{-i})}[f_i]. 
\end{equation}

We do not know whether the finitistic minmax value equals the classical minmax value for every product-measurable function. In particular, Theorem \ref{minmax-oneshot} on the two versions of the minmax value for one-shot games does not easily yield a proof that the finitistic minmax value equals the classical minmax value in repeated games. Yet, it is possible that the two notations in fact coincide whenever the latter is well defined. We circumvent this, rather non-trivial question, and build the proof of our main result on the finitistic version of the minmax value for repeated games.

Given a history $h \in H$, the subgame of $\Gamma$ starting at $h$ is the game $\Gamma^h = (I,(A_i)_{i \in I}, (f_{i,h})_{i \in I})$, where $f_{i,h} = f_i \circ s_h$ and $s_h : A^{\dN} \to A^{\dN}$ is given by $p \mapsto (h,p)$. 
Player $i$'s finitistic minmax value in the subgame starting at $h$ can be computed as  

\[\Val_i^F(\Gamma^h) =\inf_{\sigma_{-i} \in \Sigma_{-i}^F}\, \sup_{\sigma_i \in \Sigma_i} \E_{(\sigma_i,\sigma_{-i})}[f_i | h]. \]

When a player's payoff function is tail-measurable, her payoff is independent of the initial segment of the play. As a result, her finitistic minmax value is independent of the history.

\begin{theorem}\label{const-surr}
Let $\Gamma$ be a repeated game with countably many players. Assume that some player $i$'s payoff function $f_i$ is bounded, product-measurable, and tail-measurable. Then, $\Val_i^F(\Gamma^h) = \Val_i^F(\Gamma)$ holds for every history $h \in H$.
\end{theorem}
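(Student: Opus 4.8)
The plan is to prove the two inequalities $\Val_i^F(\Gamma^h) \leq \Val_i^F(\Gamma)$ and $\Val_i^F(\Gamma^h) \geq \Val_i^F(\Gamma)$ separately, and by iterating over the coordinates of $h$ it suffices to treat the case where $h = (a^0)$ is a single action profile. The core observation is the tail-measurability of $f_i$: for any play $p$, we have $f_i(s_h(p)) = f_i(h,p)$, and since $(h,p)$ differs from any play of the form $(b^0, p)$ (with $b^0$ another action profile) in only one coordinate, tail-measurability forces $f_i(s_h(p))$ to depend only on the ``tail'' of $p$, not on $a^0$. This is the mechanism that makes the subgame value history-independent; the work is in transferring strategy profiles with finite support back and forth between $\Gamma$ and $\Gamma^h$ while respecting the finite-support constraint.

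For the inequality $\Val_i^F(\Gamma^h) \leq \Val_i^F(\Gamma)$: given any $\sigma_{-i} \in \Sigma_{-i}^F$ in the game $\Gamma$, I would build a profile $\tau_{-i} \in \Sigma_{-i}^F$ in $\Gamma^h$ that mimics $\sigma_{-i}$ one stage ahead — i.e., $\tau_{-i}$ first plays the fixed pure action profile $a_{-i}^0$ dictated by $h$ (with probability one, so finite support is preserved), and thereafter, after any history $h'$, plays what $\sigma_{-i}$ plays after $(a^0, h')$. Since player $i$'s best response in $\Gamma^h$ against $\tau_{-i}$ can be pushed forward to a best response in $\Gamma$ against $\sigma_{-i}$ (prepending the forced first action of player $i$ costs nothing, as her first action is washed out by tail-measurability of $f_i$), one gets $\sup_{\sigma_i}\E_{(\sigma_i,\tau_{-i})}[f_{i,h}] \le \sup_{\sigma_i}\E_{(\sigma_i,\sigma_{-i})}[f_i]$ up to the fact that in $\Gamma^h$ the opponents' first move is forced rather than free — but since we are taking an infimum over $\sigma_{-i}$ on the right, forcing the first move only restricts the opponents, which is the correct direction. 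Taking the infimum over $\sigma_{-i} \in \Sigma_{-i}^F$ then yields the desired bound. The reverse inequality $\Val_i^F(\Gamma^h) \geq \Val_i^F(\Gamma)$ is analogous but now starting from a profile $\tau_{-i} \in \Sigma_{-i}^F$ in $\Gamma^h$ and extending it to a profile in $\Gamma$ that plays an arbitrary fixed pure action at stage $0$ and then follows $\tau_{-i}$; here one uses that player $i$'s best response in $\Gamma$ can, after the first stage, be chosen to match the continuation situation of $\Gamma^h$, and again the first-stage action profile is irrelevant to $f_i$ by tail-measurability.

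A cleaner way to organize all of this is to observe directly that for any strategy profile $\rho$ (all players) in $\Gamma$ and the ``shifted'' profile $\rho'$ in $\Gamma^h$, and conversely, the induced distributions on the \emph{tail} sigma-algebra $\calF^{Tail}$ coincide — more precisely, the pushforward of $\Prob_{\rho}$ under the shift map agrees, on tail events, with $\Prob_{\rho'}$ up to the deterministic prefix — so that since $f_i$ is $\calF^{Tail}$-measurable, $\E_{\rho}[f_i]$ is unchanged. I would phrase a short lemma to that effect (that prepending or deleting a fixed finite prefix, or more generally altering finitely many coordinates, does not change $\E[f_i]$ when $f_i$ is bounded and tail-measurable), and then the bijection between $\Sigma_{-i}^F$ profiles that respects finite support becomes routine: the forced pure prefix adds no randomizing players.

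The main obstacle I anticipate is the asymmetry in the definition of the finitistic minmax: on one side the opponents' stage-$0$ move is \emph{forced} to equal $a_{-i}^0$ (when going into $\Gamma^h$) or \emph{arbitrary but fixed} (when coming out), and one must check that this restriction of the opponents' strategy space is harmless in each direction. It is harmless precisely because in both inequalities the restriction falls on the side where we take an infimum over the opponents (making their feasible set smaller can only lower the inf, which we exploit in one direction) or can be absorbed for free (in the other direction, any opponent profile in $\Gamma$ already specifies \emph{some} stage-$0$ behavior, and we may as well purify it to a fixed action without affecting $f_i$ by tail-measurability, at the possible cost of an arbitrarily small $\ep$ when the opponents randomize at stage $0$ — but since only finitely many do, and their stage-$0$ action is tail-irrelevant, in fact no $\ep$ is needed). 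Making this last point fully rigorous — that we can replace a finite-support mixed stage-$0$ move by a pure one without changing the expected payoff, because $f_i$ ignores finitely many coordinates — is the one spot that needs care, and it is exactly where tail-measurability (not mere product-measurability) is essential.
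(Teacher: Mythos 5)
Your reduction to histories of length one and your argument for the inequality $\Val_i^F(\Gamma^h)\ \geq\ \Val_i^F(\Gamma)$ are sound: extending a finite-support profile $\tau_{-i}$ of $\Gamma^h$ backwards by an arbitrary fixed pure stage-0 move whose outcome is then ignored, and using $f_i(c^0,q)=f_{i,h}(q)$ for every profile $c^0$, indeed gives $\sup_{\tau_i}\E_{(\tau_i,\tau_{-i})}[f_{i,h}]\geq\sup_{\sigma_i}\E_{(\sigma_i,\sigma_{-i})}[f_i]\geq\Val_i^F(\Gamma)$. The gap is in the other inequality. Your transfer takes, for an \emph{arbitrary} $\sigma_{-i}\in\Sigma_{-i}^F$, the profile $\tau_{-i}$ obtained by forcing the first move to $a^0$ and then following $\sigma_{-i}$ after $(a^0,\cdot)$, and asserts $\sup_{\tau_i}\E_{(\tau_i,\tau_{-i})}[f_{i,h}]\leq\sup_{\sigma_i}\E_{(\sigma_i,\sigma_{-i})}[f_i]$. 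This is false in general: tail-measurability makes the stage-0 profile payoff-irrelevant, but not \emph{strategically} irrelevant, since it is observed and the opponents' continuation can be conditioned on it; after the particular profile $a^0$, which may be off the path of $\sigma_{-i}$ or have small probability, the continuation of $\sigma_{-i}$ can be arbitrarily bad for the opponents. Concretely, let player $i$'s payoff be $1$ iff her single non-dummy opponent plays $R$ at all but finitely many stages, and let the opponent's strategy play $L$ at stage $0$, then $L$ forever after any history whose stage-0 opponent coordinate is $L$, and $R$ forever otherwise. Then $\sup_{\sigma_i}\E_{(\sigma_i,\sigma_{-i})}[f_i]=0=\Val_i^F(\Gamma)$, yet if $h$ records $R$ for the opponent, your $\tau_{-i}$ gives player $i$ payoff $1$ in $\Gamma^h$. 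Relatedly, the remark that ``forcing the first move only restricts the opponents, which is the correct direction'' points the wrong way: shrinking the feasible set of an infimum can only raise it. The same issue undermines the ``cleaner'' shift-lemma paragraph: deleting a random, observed prefix does not yield a single shifted profile, because each opponent's continuation depends on the realized prefix and the opponents cannot jointly condition on a fictitious one; and your proposed purification of a mixed stage-0 move ``at no cost'' changes the realized history on which all continuations condition, which is exactly what cannot be waved away by tail-measurability alone.

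The missing idea is a selection/averaging step. Take $\sigma_{-i}\in\Sigma_{-i}^F$ with $\sup_{\sigma_i}\E_{(\sigma_i,\sigma_{-i})}[f_i]\leq\Val_i^F(\Gamma)+\ep$, fix a pure action $c_i\in A_i$, and note that, since $\sigma_{-i}(\emptyset)$ has finite support, only finitely many stage-0 profiles $c^0=(c_i,c_{-i})$ have positive probability. Player $i$ may play $c_i$ and then, separately after each realized $c^0$, a $\delta$-optimal continuation against $\sigma_{-i}\bigl((c^0),\cdot\bigr)$; since $f_i(c^0,q)=f_{i,h}(q)$ by tail-measurability (all length-one subgames have the same payoff function), this shows that the $x_{-i}$-weighted average over the positive-probability $c^0$ of $\sup_{\tau_i}\E_{(\tau_i,\,\sigma_{-i}((c^0),\cdot))}[f_{i,h}]$ is at most $\Val_i^F(\Gamma)+\ep+\delta$. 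Hence some realized $c^0$ yields a finite-support profile $\sigma_{-i}\bigl((c^0),\cdot\bigr)\in\Sigma_{-i}^F$ witnessing $\Val_i^F(\Gamma^h)\leq\Val_i^F(\Gamma)+\ep+\delta$, and letting $\ep,\delta\to 0$ gives the inequality; choosing a realized (hence pure) outcome also disposes of the correlation/purification worry you flagged. With this step in place, your overall outline (induction on the length of $h$, plus your argument for the reverse inequality) goes through.
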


We note that a similar statement holds for the classical minmax value defined in \eqref{minmax-rep}.

\subsection{Defending the Finitistic Minmax Value}\label{sec:surr}
We introduce the first of our two auxiliary results.

Let $\Gamma = (I,(A_i)_{i \in I}, (f_i)_{i \in I})$ be a repeated game with countably many players. Fix a player $i \in I$. For a function $d : H \to \dR$ and a history $h\in H$, we write $d^h : A \to \dR$ to denote the function $a \mapsto d(h,a)$. 

\begin{theorem}\label{martin-for-surr}
Let $\Gamma$ be a repeated game with countably many players. Consider a player $i \in I$ and assume that her payoff function $f_i$ takes values in $[0,1]$ and is product-measurable. 
Suppose moreover that $0 < \Val_i^F(\Gamma)$ and let $0 < w < \Val_i^F(\Gamma)$. There exists a function $d: H \to [0,1]$ satisfying the following conditions:
\begin{itemize}
\item[\rm(a)] At the empty history: $d(\emptyset) = w$.
\item[\rm(b)] $d(h) \leq \val_i^F(d^h)$ for every $h \in H$.
\item[\rm(c)] $\limsup_{t \to \infty} d(h^t(p)) \leq f_i(p)$ for every play $p\in A^\dN$.
\end{itemize}
\end{theorem}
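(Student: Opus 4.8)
The plan is to deduce the existence of $d$ from Borel determinacy of an auxiliary game of perfect information $\calG$ attached to $\Gamma$ and player $i$, following closely Martin's proof that Blackwell games are determined \cite{martin1998determinacy} and its recasting by Maitra and Sudderth \cite{maitra1998finitely}, with the classical minmax value replaced throughout by the finitistic one, $\val_i^F$. The desired function $d$ will be read off from a winning strategy of the player who represents player $i$ in $\calG$; the fact that such a winning strategy exists will follow from determinacy once we rule out a winning strategy for the opponent, which would contradict $w<\Val_i^F(\Gamma)$.

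\textbf{The auxiliary game.} In $\calG$ two players $\I$ (acting on behalf of player $i$) and $\II$ (acting on behalf of $i$'s opponents) build in tandem a play $p=(a^0,a^1,\dots)$ of $\Gamma$ together with a sequence of reals $w=v^0,v^1,v^2,\dots\in[0,1]$, the \emph{promises}. In round $t$, with history $h=h^t(p)$ and promise $v^t$ on the table, $\I$ first commits to a function $g^t:A\to[0,1]$ subject to $v^t\le\val_i^F(g^t)$ (such $g^t$ always exists, e.g.\ the constant $v^t$, since $\val_i^F$ of a constant equals that constant); $\II$ then moves, the round is resolved into a profile $a^t\in A$, and the promise is updated to $v^{t+1}:=g^t(a^t)$. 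The precise form of $\II$'s move and of the resolution rule is chosen, as in \cite{martin1998determinacy,maitra1998finitely}, so that $\II$ effectively selects the whole profile $a^t$ (player~$i$'s coordinate included) while remaining informed of the full past, $\I$'s functions $g^0,\dots,g^t$ included. Player $\I$ wins a play iff $\limsup_{t\to\infty}v^t\le f_i(p)$. Since $f_i$ is product-measurable, and since $p\mapsto\limsup_t v^t$ depends in a Borel way on the play of $\calG$ once the (function- and measure-valued) move spaces are given their natural standard-Borel structure — after the usual discretization, replacing each $g^t$ by a finitely-valued function of finitely many coordinates and the promises by rationals, which does not reduce $\I$'s power — the winning set of $\calG$ is Borel, so $\calG$ is determined by Martin's theorem. (Tail-measurability of $f_i$ plays no role here.)

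\textbf{Reading off $d$.} Suppose $\I$ has a winning strategy; take it pure, say $\phi$. For $h\in H$ run $\calG$ with $\I$ following $\phi$ while $\II$ plays so as to realize $h$, i.e.\ so that the generated profiles are exactly the coordinates of $h$; this determines $\phi$'s functions $g^0,\dots,g^{\len(h)-1}$ and a promise $v^{\len(h)}$, and we set $d(h):=v^{\len(h)}$. Then $d(\emptyset)=v^0=w$, which is (a). For any play $p$, the $\calG$-plays realizing the prefixes $h^0(p),h^1(p),\dots$ amalgamate into one $\phi$-consistent play of $\calG$ whose promise sequence is $\big(d(h^t(p))\big)_t$; as $\phi$ wins it, $\limsup_t d(h^t(p))\le f_i(p)$, which is (c). For (b): at a history $h$ of length $t$, $\phi$ prescribes a function $g^t$ with $v^t\le\val_i^F(g^t)$ by legality of $\phi$, and since $\II$ can realize every child $(h,a)$ (it controls the whole profile), $d(h,a)=g^t(a)$ for every $a\in A$, i.e.\ $d^h=g^t$; hence $d(h)=v^t\le\val_i^F(g^t)=\val_i^F(d^h)$.

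\textbf{The main obstacle.} What remains, and where the real work lies, is to show that $\I$ does have a winning strategy; by determinacy it suffices to rule out a winning strategy $\tau$ for $\II$. From such a $\tau$ one must build a finite-support strategy profile $\sigma_{-i}\in\Sigma_{-i}^F$ with $\sup_{\sigma_i\in\Sigma_i}\E_{(\sigma_i,\sigma_{-i})}[f_i]\le w$, contradicting $w<\Val_i^F(\Gamma)=\inf_{\sigma_{-i}\in\Sigma_{-i}^F}\sup_{\sigma_i}\E_{(\sigma_i,\sigma_{-i})}[f_i]$. The guiding idea is that the opponents, in $\Gamma$, simulate a play of $\calG$ in which $\II$ follows $\tau$ while the fictitious $\I$ always plays the constant function $v^t\equiv w$, so that all promises stay equal to $w$; then ``$\II$ wins'' forces $f_i(p)<w$ on every realized play, giving the bound. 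The crux — and the point at which Martin's technique \cite{martin1998determinacy} and the finitely-additive refinement of Maitra and Sudderth \cite{maitra1998finitely} are genuinely needed — is that in $\calG$ player $\II$ (hence $\tau$) gets to name player~$i$'s coordinate of $a^t$, whereas in $\Gamma$ the opponents do not control player~$i$; one must arrange the simulation so that the play generated in $\Gamma$ by $(\sigma_i,\sigma_{-i})$ is, with probability one, consistent with $\tau$ and therefore won by $\II$. The finitistic restriction on the opponents is essential throughout: it is what makes the moves $x_{-i}\in X_{-i}^F$ legitimate and the value $\val_i^F(g^t)$ — and the expectations entering the simulation — well defined for the arbitrary bounded functions $g^t$ that arise.
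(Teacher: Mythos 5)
Your construction is essentially the paper's: the auxiliary perfect-information game you describe is, up to inessential details (e.g.\ you drop the requirement that player $\II$ move only to profiles where the current function is positive), the game $\calM_{i}(\Gamma,w)$ used in the paper; determinacy is invoked from Martin's Borel determinacy exactly as there (note this needs no discretization of $\I$'s move space --- the winning set is Borel when the move sets carry the discrete topology, so your ``replace $g^t$ by finitely-valued functions of finitely many coordinates'' is both unnecessary and itself an unproven reduction); and your reading-off of $d$ from a winning strategy of $\I$, with the verification of (a)--(c), matches the paper's construction of $d$ from $\sigma_\I$. That half is fine.

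The gap is precisely in the half you label ``the main obstacle'': the assertion that $\I$ has a winning strategy whenever $0<w<\Val_i^F(\Gamma)$ (Lemma~\ref{lemma:PlayerI}) is the real content of the theorem, and you neither prove it nor sketch it correctly. Feeding $\II$'s winning strategy $\tau$ the constant function $w$, so that all promises stay at $w$, is not how the Martin/Maitra--Sudderth argument goes and cannot be repaired in that form: since $\tau$ names the entire action profile, including player $i$'s coordinate, the play realized in $\Gamma$ under $(\sigma_i,\sigma_{-i})$ has no reason to be $\tau$-consistent, and in the actual argument the fictitious moves of $\I$ are not constant --- they are chosen along the run, as a function of the realized profiles and of $\tau$ itself, so that the simulated position remains $\tau$-consistent almost surely while the promises control the conditional expectation of $f_i$. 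Moreover, this difficulty (that $\II$ controls the whole profile) is already present and resolved in the two-player case, so it is not the new issue of the present setting; what is genuinely new, and what the paper isolates as the only delicate modification, is the measurability of the opponents' strategies built from $\tau$, which is handled by the finite-support restriction: at each stage only finitely many histories have positive probability, so the constructed strategies can be set to a fixed action elsewhere. Your proposal does not address this point, so the adaptation of Martin's construction --- the step on which the whole theorem rests --- remains unestablished in your write-up.
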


Intuitively, the function $d$ assigns a fictitious continuation payoff to each subgame of $\Gamma$ in a way that  ``proves'' player $i$'s finististic minmax to be at least $w$. Condition (a) is the initialization. Condition (b) requires that, at any history $h$, if the opponents of player $i$ use a mixed strategy profile $x_{-i}$ with finite support, then player $i$ has a response $x_i$ such that the expectation of the function $d^h$ under the mixed action profile $(x_{i},x_{-i})$ is at least $d(h)$. In other words, player $i$ is able to defend the payoff of $d(h)$ in the one-shot game. Condition (c) states that the sequence of continuation values is at most the actual payoff. We give two examples.

\begin{example}\label{exl:voorneveld}\rm
Consider the example of Section~\ref{sec2}.\ref{subsecn.example}. Define $d$ recursively: let $w \in (0,1)$ and set $d(\emptyset) = w$. Let $h \in H$ be such that $d(h)$ has been defined. If $d(h) = 0$, let $d(h,a) = 0$ for each $a \in A$. If $d(h) > 0$, let $d(h,a) = r_i(a)$ for each $a \in A$. The function $d$ satisfies the conditions of the theorem since $\val_i^F(r_i) = 1$. $\Diamond$
\end{example}

\begin{example}\label{exl:0sum}\rm
Let $\Gamma$ be the following two-player zero-sum repeated game: The action set is $A_1=\{T,B\}$ for player 1 and $A_2=\{L,R\}$ for player 2 (and all other players are dummy). The payoff function $f = f_1$ assigns payoff 1 to each play $p\in(A_1\times A_2)^\dN$ such that $(T,L)$ or $(B,R)$ are played infinitely many times, and assigns payoff 0 to all other plays. One can think of $\Gamma$ as a repeated matching pennies game
\[
\begin{blockarray}{ccc}
 & L & R\\
\begin{block}{c(cc)}
  T & 1 & 0 \\
  B & 0 & 1 \\
  \end{block}
\end{blockarray}\vspace{-0.2cm}
\]
where player 1 wins if she receives payoff 1 infinitely often. The payoff function $f$ is tail-measurable. 
The value of $\Gamma$ (and of $\Gamma^h$ for each history $h$) is 1,
and one optimal strategy for player~1 is playing the mixed action $(\tfrac{1}{2},\tfrac{1}{2})$ at each stage. 

Let $\psi_{-},\psi_{+}:[0,1]\to[0,1]$ be the functions $\psi_{-}(x)=x-\frac{1}{2}\min\{x,1-x\}$ and $\psi_{+}(x)=x+\frac{1}{2}\min\{x,1-x\}$. Define $d$ recursively. Let $d(\emptyset) = w$. If $d(h)$ has been defined, let $d^h : A \to [0,1]$ be given as follows:
\begin{equation}\label{equ:exl0sum}
\begin{blockarray}{ccc}
 & L & R\\
\begin{block}{c(cc)}
  T & \psi_{+}(d(h)) & \psi_{-}(d(h)) \\[0.2cm]
  B & \psi_{-}(d(h)) & \psi_{+}(d(h)) \\
  \end{block}
\end{blockarray}\vspace{-0.2cm}
\end{equation}
Note that the value of $d^h$ is exactly $d(h)$. Consider a play $p$ where $(T,L)$ or $(B,R)$ are played only finitely many times. Then $d(h^{t}(p))$ is monotonically decreasing after some stage, and is approaching $0$. $\Diamond$
\end{example}

The proof of Theorem \ref{martin-for-surr} is a variation of the proof of Blackwell determinacy in Martin \cite{martin1998determinacy} and in Maitra and Sudderth \cite{maitra1998finitely}. The main tool of the proof is a perfect information game of two players, I and II, in which player I's goal is to ``prove'' that the minmax value in the game $\Gamma$ is at least $w$. In fact, the function $d$ is one of player I's winning strategies in that game. 
We provide a sketch of the argument, noting what modifications to Martin's proof are required, but omit the details.\medskip

\noindent\begin{proof}[Proof  of Theorem \ref{martin-for-surr}]
Given a game $\Gamma$, a player $i$ satisfying the conditions of the theorem, and a number $w \in (0,1]$, consider the following perfect information game, denoted $\calM_{i}(\Gamma,w)$:
\begin{itemize}
    \item Player~I chooses a one-shot payoff function $r_0 : A \to [0,1]$ such that $\val_i^F(r_0) \geq w$.
    \item Player~II chooses an action profile $a^0 \in A$ such that $r_0(a^0) > 0$.
    \item Player~I chooses a one-shot payoff function $r_1:A \to [0,1]$ such that $\val_i^F(r_1) \geq r_0(a^0)$.
    \item Player~II chooses an action profile $a^1 \in A$ such that $r_1(a^1)>0$.
    \item Player~I chooses a one-shot payoff function $r_2 : A \to [0,1]$ such that $\val_i^F(r_2) \geq r_1(a^1)$, and so on.
\end{itemize}
To distinguish between histories and plays in $\Gamma$ and in $\calM_i(\Gamma,w)$, we call the latter \emph{positions} and \emph{runs}.
The outcome of $\calM_i(\Gamma,w)$ is a run $(r_0,a^0,r_1,a^1,\ldots,)$. Player I wins $\calM_{i}(\Gamma,w)$ if 
\[\limsup_{t \to \infty}r_{t}(a^{t}) \leq f_i(a^0,a^1,\ldots).\]

Note that player~I has a legal move at any stage of the game, for instance the function $r : A \to \dR$ such that $r(a)=1$ for all $a \in A$. By induction one can show that player~II always has a legal move as well. Let $T$ be the set of all positions in the game $\calM_{i}(\Gamma,w)$. Then $T$ is a pruned tree on the set $R \cup A$, where $R$ denotes the set of functions $r : A\to\dR$. One can check that player I's winning set in $\calM_{i}(\Gamma,w)$ is a Borel subset of the set of runs,
where the set of runs is endowed with the product topology and $R \cup A$ is endowed with the discrete topology. By Martin \cite{martin1975borel}, $\calM_{i}(\Gamma,w)$ is determined: either player~I has a winning strategy in the game, or player~II does.

To prove the theorem, one establishes the following fact.
\begin{lemma}\label{lemma:PlayerI}
Player \I~has a winning strategy in $\calM_{i}(\Gamma,w)$ whenever $0 < w < \Val_i^F(\Gamma)$.
\end{lemma}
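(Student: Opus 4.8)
The plan is to derive the statement from the Borel determinacy of $\calM_{i}(\Gamma,w)$ established above: it suffices to rule out that player~II has a winning strategy, since determinacy then awards a winning strategy to player~I. So suppose, towards a contradiction, that $\tau$ is a winning strategy for player~II in $\calM_{i}(\Gamma,w)$. From $\tau$ I would construct a strategy profile $\sigma_{-i}^{*}\in\Sigma_{-i}^{F}$ of the opponents of player~$i$ in $\Gamma$ with the property that $\E_{(\sigma_{i},\sigma_{-i}^{*})}[f_i]\le w$ for \emph{every} strategy $\sigma_{i}\in\Sigma_{i}$. Since $\sigma_{-i}^{*}\in\Sigma_{-i}^{F}$, this gives $\Val_i^F(\Gamma)\le\sup_{\sigma_{i}\in\Sigma_{i}}\E_{(\sigma_{i},\sigma_{-i}^{*})}[f_i]\le w$, contradicting the hypothesis $w<\Val_i^F(\Gamma)$ and completing the proof.

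The construction of $\sigma_{-i}^{*}$ from $\tau$, together with the verification of the payoff bound, follows the template of Martin's proof of Blackwell determinacy and of its $\limsup$-variant due to Maitra and Sudderth. I would define $\sigma_{-i}^{*}$ recursively on the histories of $\Gamma$, attaching to each history $h$ a position of $\calM_{i}(\Gamma,w)$ that is reached consistently with $\tau$ and that ``shadows'' the play of $\Gamma$ up to $h$. Along each shadow run, player~I's moves are taken to be legal moves --- so the $\val_i^F$ of the declared value function is at least the current declared value --- chosen in such a way that the declared values $r_t(a^t)$, read off along the realized play, form a bounded supermartingale with respect to $\Prob_{(\sigma_{i},\sigma_{-i}^{*})}$ started from the value $w$. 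The key point is that because $\val_i^F$ is an infimum over mixed action profiles \emph{of finite support}, the stage-by-stage choices that maintain this supermartingale can be arranged so that at each history only finitely many of the opponents randomize; this is exactly what places $\sigma_{-i}^{*}(h)$ into $X_{-i}^{F}$. Finally, since $\tau$ wins for player~II, along every shadow run one has $\limsup_{t}r_t(a^t)\ge f_i(a^0,a^1,\dots)$ for the shadowed play; feeding this into the convergence theorem for bounded supermartingales (so that the declared values converge $\Prob_{(\sigma_{i},\sigma_{-i}^{*})}$-almost surely) yields $\E_{(\sigma_{i},\sigma_{-i}^{*})}[f_i]\le w$.

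I expect the main obstacle to be the interface between the deterministic perfect-information game $\calM_{i}(\Gamma,w)$ and the randomized game $\Gamma$: player~II's strategy $\tau$ returns a single action profile, whereas at each stage of $\Gamma$ the opponents must commit to a \emph{mixed} action, and the whole scheme has to remain valid against an arbitrary and unknown mixed strategy of player~$i$. Handling this requires, at each stage, a careful choice of player~I's declared value function together with a one-shot minimax argument over the finite action set $A_i$, so that the randomizations of the opponents and of player~$i$ combine into continuations that stay consistent with $\tau$ and along which the declared values do not increase in expectation. Once this step is in hand, the remaining bookkeeping --- propagating the supermartingale property, invoking martingale convergence, and passing from the $\limsup$ inequality to the inequality in expectation --- is routine.
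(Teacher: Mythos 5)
Your proposal takes the same route as the paper: the paper proves this lemma only by deferring to the analogous arguments of Martin and Maitra--Sudderth, and your contrapositive-via-Borel-determinacy architecture --- converting a winning strategy $\tau$ of player~II into a finite-support minimizing profile $\sigma_{-i}^{*}\in\Sigma_{-i}^{F}$ with $\sup_{\sigma_i}\E_{(\sigma_i,\sigma_{-i}^{*})}[f_i]\leq w$, contradicting $w<\Val_i^F(\Gamma)$ --- is exactly the intended argument, with the conversion deferred to the same sources at essentially the same level of detail as the paper itself. The one point the paper does spell out and you leave implicit is the measurability of the constructed strategies, which is handled by observing that under a finite-support profile only finitely many histories at each stage occur with positive probability, so off these histories the strategies may be defined by an arbitrary fixed action.
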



First we complete the proof of Theorem \ref{martin-for-surr} using the lemma. Take $0 < w < \Val_i^F(\Gamma)$ and let $\sigma_\I$ be a winning strategy of player I in $\calM_{i}(\Gamma,w)$. 
The strategy $\sigma_\I$ induces the function $d$ with the desired properties, as follows. Define $d(\emptyset) = w$. 
Let $T_\I \subseteq T$ denote the set of positions in the game $\calM_{i}(\Gamma,w)$ of even length (i.e., player I's positions) that are consistent with $\sigma_\I$. Let $H_\I \subseteq H$ denote the set of histories of $\Gamma$ consisting of the empty history $\emptyset$, and non-empty histories $(a^0,\ldots,a^{t-1})$ for which there is a sequence $r_0,\ldots,r_{t-1}$ of elements of $R$ such that $h^* = (r_0,a^0,\ldots,r_{t-1},a^{t-1})$ is an element of $T_\I$. Notice that $h^*$ is uniquely determined by $h \in H_\I$. If $h \in H \setminus H_\I$, define $d(h,a) = 0$ for each $a \in A$. If $h \in H_\I$, let $r_{t} = \sigma_\I(h^*)$ be player I's move at position $h^*$, and let $d(h,a) = r_{t}(a)$ for each $a \in A$.
The reader can verify that $d$ satisfies (a)--(c).


The proof of the lemma is similar to the proof of the analogous result in Martin \cite{martin1998determinacy} and Maitra and Sudderth \cite{maitra1998finitely}. The only delicate point concerns the measurability property of strategies: for each stage $t$, player $i$, and action $a_i$, the mapping $h \mapsto \sigma_i(h)(a_i)$ should be measurable. Since in the lemma we restrict attention to strategy profiles with finite support, the number of histories that occur with positive probability at each stage is finite. Since the definition of the strategy in histories that occur with probability 0 is irrelevant, we can set the action chosen at these histories to be an arbitrary fixed action, ensuring that the strategies constructed in the proof of the lemma are measurable.
\end{proof}

\subsection{Defending the Finitistic Minmax Value in Each Subgame}\label{sec:suff}
We have interpreted the function $d$ of Theorem \ref{martin-for-surr} as a ``proof'' that the (finitistic minmax) value of player $i$ in $\Gamma$ is at least $w$. In this subsection we are after a function $d$ that could serve as a ``proof'' that player $i$'s finitistic minmax value in every subgame $\Gamma^h$ of $\Gamma$ is at least $\Val_i^F(\Gamma^h) - \delta$. We thus modify the conditions the function $d$ should satisfy. Condition (b) remains, and a new requirement is added: that the finitistic minmax value of the one-shot game $d^h$ be at least $\Val_i^F(\Gamma^h) - \delta$, for every history $h$. Moreover, we replace condition (c) with a weaker requirement. 

\begin{theorem}\label{theorem:martin}
Let $\Gamma$ be a repeated game with countably many players. Consider a player $i \in I$ and assume that her payoff function $f_i$ takes values in $[0,1]$ and is product-measurable. Then, for every $\delta > 0$ there exists a function $d : H \to [0,1]$ satisfying
the following conditions:
\begin{enumerate}
\item[(M.1)] $d(h) \leq \val^F_{i}(d^h)$ for every $h \in H$.
\item[(M.2)] $\Val_i^F(\Gamma^h) - \delta \leq \val^F_{i}(d^h)$ for every $h \in H$. 
\item[(M.3)] For every strategy profile with finite support $\sigma \in \Sigma^F$ that satisfies
\begin{equation}\label{equ:pr-m3} 
\val^F_{i}(d^h) \,\leq\, \E_{\sigma(h)}[d^h] \ \ \ \forall h \in H,
\end{equation}
we have
\[\val^F_{i}(d^h) \,\leq\, \E_\sigma[f_i | h]\ \ \ \forall h \in H. \]
\end{enumerate}
\end{theorem}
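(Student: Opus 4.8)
The plan is to refine the construction underlying Theorem~\ref{martin-for-surr} by allowing \emph{restarts}: we run the Martin-type construction, but whenever at some history $h$ the continuation value built so far has fallen below $\Val_i^F(\Gamma^h)-\delta$, we discard the function on the subtree below $h$ and reinstall a fresh one obtained from Theorem~\ref{martin-for-surr} applied to the subgame $\Gamma^h$ with target $\Val_i^F(\Gamma^h)-\tfrac{\delta}{2}$. Concretely, for each history $g$ with $\Val_i^F(\Gamma^g)>\delta$ fix, via Theorem~\ref{martin-for-surr} applied to $\Gamma^g$ with $w=\Val_i^F(\Gamma^g)-\tfrac{\delta}{2}$, a function $d_g$ on the histories of $\Gamma^g$ satisfying its conditions (a)--(c); for $g$ with $\Val_i^F(\Gamma^g)\le\delta$ put $d_g\equiv 0$. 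Now define $d$ together with a set of \emph{anchors} $S\subseteq H$ recursively on $\len(h)$: let $\emptyset\in S$ with $d(\emptyset)=d_\emptyset(\emptyset)$; once all histories of length $\le t$ have been processed, for a history $h$ of length $t{+}1$ let $g$ be the longest anchor that is a proper prefix of $h$, write $h=(g,k)$ with $k$ a history of $\Gamma^g$, and set $d(h):=d_g(k)$ --- unless $\Val_i^F(\Gamma^h)>\delta$ and $d_g(k)<\Val_i^F(\Gamma^h)-\delta$, in which case put $h\in S$ and $d(h):=\Val_i^F(\Gamma^h)-\tfrac{\delta}{2}$. Thus on the subtree of any anchor $g$ the function $d$ agrees with $d_g$ except at deeper anchors, where it has been raised (we restart at $h$ exactly when $d_g(k)<\Val_i^F(\Gamma^h)-\delta<\Val_i^F(\Gamma^h)-\tfrac{\delta}{2}=d(h)$).

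Conditions (M.1) and (M.2) then drop out of two elementary facts: $\val^F_i$ is monotone in the payoff function, and $d$ pointwise dominates $d_g$ on the subtree of each anchor $g$. Indeed, at a non-anchor $h=(g,k)$ we get $\val^F_i(d^h)\ge\val^F_i(d_g^k)\ge d_g(k)=d(h)$ from monotonicity and condition (b) of Theorem~\ref{martin-for-surr} for $d_g$, and $d(h)=d_g(k)\ge\Val_i^F(\Gamma^h)-\delta$ because otherwise $h$ would be an anchor; at an anchor $h$ we get $\val^F_i(d^h)\ge\val^F_i(d_h^{\emptyset})\ge d_h(\emptyset)=\Val_i^F(\Gamma^h)-\tfrac{\delta}{2}$, which exceeds both $d(h)$ and $\Val_i^F(\Gamma^h)-\delta$. (Whenever $\Val_i^F(\Gamma^h)\le\delta$ all of this is trivial, since $\val^F_i(d^h)\ge 0$.)

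For (M.3), fix $\sigma\in\Sigma^F$ with $\val^F_i(d^h)\le\E_{\sigma(h)}[d^h]$ for all $h$. Together with (M.1) this gives $d(h)\le\E_{\sigma(h)}[d^h]$ for all $h$, so $t\mapsto d(h^t(p))$ is a bounded submartingale under $\prob_\sigma$; hence it converges $\prob_\sigma$-a.s.\ to a limit $L(p)$, and the (closable) submartingale property yields $\E_\sigma[L\mid h]\ge\val^F_i(d^h)$ for every $h$. It therefore suffices to show that $L\le f_i$ holds $\prob_\sigma$-a.s., since then $\E_\sigma[f_i\mid h]\ge\E_\sigma[L\mid h]\ge\val^F_i(d^h)$. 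If the play $p$ meets only finitely many anchors, it eventually remains in a single block with anchor $g$, so $d(h^t(p))=d_g(k_t)$ for all large $t$ (with $k_t$ the suffix of $h^t(p)$ beyond $g$), whence $L(p)=\limsup_t d_g(k_t)\le f_i(p)$ by condition (c) of Theorem~\ref{martin-for-surr} for $d_g$.

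The remaining case --- $p$ meets infinitely many anchors --- is the main obstacle, and it is exactly here that restricting to $d$-respecting $\sigma$ (rather than all plays, as in condition (c)) is used: I would show this event is $\prob_\sigma$-null. Each restart is quantitatively expensive: at a restart at $g$ the pre-restart value assigned by the previous block is $<\Val_i^F(\Gamma^g)-\delta$, whereas the post-restart value is $\Val_i^F(\Gamma^g)-\tfrac{\delta}{2}$ --- an upward jump of more than $\tfrac{\delta}{2}$. Since a bounded submartingale a.s.\ makes only finitely many upcrossings of any fixed interval, only finitely many restarts can occur $\prob_\sigma$-a.s., reducing to the previous case. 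The delicate point --- where I expect most of the work to lie --- is to pin down the \emph{right} bounded submartingale: $t\mapsto d(h^t(p))$ records only the post-restart value, so one should instead follow the companion process equal to $d(h^t(p))$ off anchors and equal to the pre-restart (shadow) value at anchors, and verify that it is still a submartingale under every $d$-respecting $\sigma$ --- which should follow from condition (b) of the block functions $d_g$ and the domination $\val^F_i(d^h)\ge\val^F_i(d_g^k)$. Should this companion process fail to be a submartingale verbatim, the fallback is to retune the restart threshold and the re-anchoring level so that every restart forces a matching upcrossing of a fixed-width interval, without disturbing (M.1)--(M.2).
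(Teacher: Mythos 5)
Your construction is the same blockwise re-initiation scheme as the paper's, and your verification of (M.1)--(M.2) (monotonicity of $\val_i^F$ plus condition (b) of each block function) is fine; the gap is exactly where you locate it, in (M.3), and neither of the two devices you offer closes it. The upcrossing argument fails because a restart at $h$ is an upcrossing of the interval $\bigl(\Val_i^F(\Gamma^h)-\delta,\ \Val_i^F(\Gamma^h)-\tfrac{\delta}{2}\bigr)$, whose location moves with $h$ --- Theorem~\ref{theorem:martin} does not assume tail-measurability, so $\Val_i^F(\Gamma^h)$ is genuinely history-dependent --- and a bounded submartingale can cross infinitely many moving intervals. The shadow-process fallback fails as well: writing $g$ for the current anchor and $h=(g,k)$, one checks that the shadow process has one-step conditional expectation $\E_{\sigma(h)}[d_g^k]$ at $h$, whereas \eqref{equ:pr-m3} only gives $\E_{\sigma(h)}[d^h]\geq\val_i^F(d^h)\geq\val_i^F(d_g^k)\geq d_g(k)$; since $d^h$ exceeds $d_g^k$ precisely at those children that are anchors (where you overwrote the value upward by more than $\tfrac{\delta}{2}$), a $d$-respecting $\sigma$ may satisfy \eqref{equ:pr-m3} by loading probability onto those inflated entries, in which case $\E_{\sigma(h)}[d_g^k]<d_g(k)$ is possible. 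So the shadow process need not be a submartingale, and you are left with no almost-sure bound on the number of restarts, which is the heart of (M.3).

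The culprit is the overwriting at the anchor itself, and the paper's definition is precisely the repair you are groping for: at a re-initiation history $h$ keep $d(h)$ equal to the \emph{old} block's (low) value, $d(h)=\widehat D_{\alpha(h^-)}(h)$, and let the new block $\widehat D_h$ govern only the continuation values $d(h,a)$. Condition (M.1) still holds at a restart because $\val_i^F(d^h)\geq\widehat D_h(h)\geq\Val_i^F(\Gamma^h)-\tfrac{\delta}{2}>d(h)$, and --- this is the point --- every restart now forces the submartingale $d$ itself to have conditional drift at least $\tfrac{\delta}{2}$ there: $d(h)+\tfrac{\delta}{2}\leq\val_i^F(d^h)\leq\E_{\sigma(h)}[d^h]$. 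Since $d$ is $[0,1]$-valued, the expected number of restarts under any $d$-respecting $\sigma\in\Sigma^F$ is bounded (by about $2/\delta$) in every subgame, hence a.s.\ finite, and beyond the last restart your own argument (condition (c) of the final block, together with the bounded-submartingale/reverse-Fatou step you already have) completes (M.3). In short, replace ``retune the restart threshold'' by ``do not re-anchor the value at the restart history at all''; with that single change your proof coincides with the paper's.
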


\begin{example}\rm
The function $d$ defined in Example \ref{exl:voorneveld} violates (M.2), since there are histories $h$ such that the one-shot payoff function $d^h$ is identically zero, whereas $\Val_i^F(\Gamma^h) = 1$ for each history $h$. 

Consider instead the function $d$ defined by letting $d(\emptyset) = 1$ and $d(h,a) = r_i(a)$ for each $h \in H$ and $a \in A$. Thus $d^h = r_i$, and so $\val_i^F(d^h) = 1$ for each $h$. Conditions (M.1), (M.2), and (M.3) are all satisfied. To verify (M.3), consider a strategy profile $\sigma \in \Sigma^F$ satisfying \eqref{equ:pr-m3}. Then for each $h \in H$ the mixed action profile $\sigma(h)$ is supported by the set of action profiles $a \in A$ such that $r_i(a) = 1$. This implies that $\prob_\sigma(\cdot \mid h)$ is supported by the set of plays $p \in A^\dN$ with $f_i(p) = 1$. 

Note however that the function $d$ does not satisfy condition (c) of Theorem \ref{martin-for-surr}: consider for instance the play $p$ where at each stage all the players in $-i$ play $0$, whereas player $i$ plays $0$ at even stages and $1$ at odd stages. Then $i$'s reward is $0$ at even stages and $1$ at odd stages, implying that $f_i(p) = 0$ whereas $\limsup_{t \to \infty}d(h^t(p)) = 1$. In fact, one can show that there is no function $d$ that simultaneously satisfies conditions~(M.2) and~(M.3) of Theorem \ref{theorem:martin} and condition (c) of Theorem \ref{martin-for-surr}.\medskip
%
\end{example}

\begin{example}\rm
The function $d$ constructed in Example \ref{exl:0sum} fails to satisfy condition (M.2): we have seen that $\val_i^F(d^h)$ can be arbitrarily close to zero. 

Fix $\delta \in (0,\frac{2}{3})$, and
consider instead a function $d$ defined recursively as follows. Let $d(\emptyset) = 1 - \delta$. Take $h \in H$ such that $d(h)$ has been defined. If $1-\delta \leq d(h)$, define $d^h : A \to [0,1]$ to be given by \eqref{equ:exl0sum}; if $d(h) < 1-\delta$, define $d^h$ to be the function
\[
\begin{blockarray}{ccc}
 & L & R\\
\begin{block}{c(cc)}
  T & 1 - \tfrac{1}{2}\delta & 1 - \tfrac{3}{2}\delta \\[0.2cm]
  B & 1 - \tfrac{3}{2}\delta & 1 - \tfrac{1}{2}\delta \\
  \end{block}
\end{blockarray}.\vspace{-0.2cm}
\]
Thus, if $1-\delta \leq d(h)$, then $d(h) = \val_i^F(d^h)$; and if $d(h) < 1 - \delta$, then $1 - \delta = \val_i^F(d^h)$. This shows that both (M.1) and (M.2) are satisfied. To verify (M.3), let $\sigma$ satisfy \eqref{equ:pr-m3}. Then, for each $h \in H$, the measure $\sigma(h)$ places probability of at least $\tfrac{1}{2}$ on the set $\{(T,L),(B,R)\}$. 
This implies that $a^t \in \{(T,L),(B,R)\}$ for infinitely many $t \in \dN$ almost surely with respect to the measure $\prob_\sigma(\cdot\mid h)$. Thus $\E_\sigma[f_i | h] = 1$. 
\end{example}

\noindent\begin{proof}[Proof of Theorem~\ref{theorem:martin}]
Fix $\delta > 0$. \medskip

\noindent\textsc{The function $\widehat D_{h}$.} Consider an arbitrary history $h\in H$. Let $H_h\subseteq H$ be the set of all histories that extend $h$ (including $h$ itself). We define a function $\widehat D_{h} : H_h \to [0,1]$ as follows.

Suppose first that $\Val_i^F(\Gamma^h) \geq \delta/2$. Apply Theorem \ref{martin-for-surr} to the subgame $\Gamma^h$ and $w = \Val_i^F(\Gamma^h) - \delta/2$. This yields a function $\widehat D_{h} : H_h \to [0,1]$ with the following properties:
\begin{itemize}
\item At the history $h$:
\begin{equation}\label{equ:initath}
\widehat D_{h}(h) = \Val_i^F(\Gamma^h) - \tfrac{\delta}{2}.
\end{equation}
\item For every $g \in H_h$: 
\begin{equation}
\label{equ:m1} 
\widehat D_{h}(g) \leq \val_i^F(\widehat D_{h}^{g}).
\end{equation}
\item For every play $p$ that extends $h$ we have 
\begin{equation}\label{equ:pl1wins}
\limsup_{t \to \infty} \widehat D_h(h^t(p)) \leq f_i(p).
\end{equation}
\end{itemize}
If $\Val_i^F(\Gamma^h)< \delta/2$, then we let $\widehat D_{h}$ be the constant zero function. This definition satisfies \eqref{equ:m1}
and \eqref{equ:pl1wins}, whereas \eqref{equ:initath} has to be replaced by  
\begin{equation}\label{equ:lessthandelta}
\Val_i^F(\Gamma^h) - \tfrac{\delta}{2} \leq \widehat D_{h}(h) = 0.
\end{equation}

\noindent\textsc{The function $d$.} We recursively define a function $d : H \to [0,1]$ and, simultaneously, an auxiliary function $\alpha:H \to H$, which assigns to each history $h$ a prefix $\alpha(h)$ of $h$. 
\begin{itemize}
\item  At the empty history: Set $\alpha(\emptyset)= \emptyset$ and  $d(\emptyset) = \widehat D_{\alpha(\emptyset)}(\emptyset)=\widehat D_{\emptyset}(\emptyset) = \Val_i^F(\Gamma)-\delta/2$.
\item  Now consider a history $h \neq \emptyset$, and suppose that $\alpha(g)$ and $d(g)$ are already defined for each prefix $g$ of $h$, with $g\neq h$. Let $h^-$ denote the prefix of $h$ up to the previous stage.
\begin{itemize}
\item Set $d(h) = \widehat D_{\alpha(h^-)}(h)$.
\item If $\Val_i^F(\Gamma^h) - \delta \leq d(h)$, set $\alpha(h)=\alpha(h^-)$. Otherwise, set $\alpha(h)= h$; in this case we say that $d$ \emph{re-initiates} at $h$.
\end{itemize}
\end{itemize}

The intuition behind the definition of the function $d$ is the following. We start at $h_0 = \emptyset$ with the function $\widehat D_{h_0}$ and follow it until we encounter a history $h_1$, say in period $t_1$, where $\widehat D_{h_0}(h_1) < \Val_i^F(\Gamma^{h_1}) - \delta$. From this point on, the function $\widehat D_{h_0}$ is no longer useful, and re-initiation occurs, which takes effect from the next stage, i.e., stage $t_1 + 1$. As of stage $t_1 + 1$, $d$ follows the function $\widehat D_{h_1}$, until encountering a history $h_2$, say in period $t_2$, such that $\widehat D_{h_1}(h_2) < \Val_i^F(\Gamma^{h_2}) - \delta$. As of stage $t_2 + 1$, $d$ follows $\widehat D_{h_2}$, and so on.

We show that $d$ satisfies the properties (M.1)--(M.3).\medskip

\noindent\textsc{Verifying (M.1) and (M.2).} For each history $h\in H$, as $d(h,a)=\widehat D_{\alpha(h)}(h,a)$ for all action profiles $a\in A$, we have equivalently 
\begin{equation}\label{eqn.defined}
d^h = \widehat D_{\alpha(h)}^h.
\end{equation}
For the empty history $\emptyset$, by \eqref{equ:initath}, \eqref{equ:lessthandelta}, \eqref{equ:m1}, and \eqref{eqn.defined},
\begin{align*}
\Val_i^F(\Gamma) - \delta \,&<\, \Val_i^F(\Gamma) - \tfrac{\delta}{2}\, \leq\,\widehat D_{\emptyset}(\emptyset)\,=\,d(\emptyset)\\ & 
\leq\, \val_i^F(\widehat D_{\emptyset}^\emptyset)\,=\,\val_i^F(d^\emptyset),
\end{align*}
which proves (M.1) and (M.2) for $\emptyset$.

Consider now a history $h\neq\emptyset$ at which $d$ does not re-initiate. 
Since $\alpha(h)=\alpha(h^-)$, by \eqref{equ:m1}, and by \eqref{eqn.defined},
\begin{align*}
\Val_i^F(\Gamma^h) - \delta\, &\leq \,d(h) \,=\, \widehat D_{\alpha(h^-)}(h) \,=\, \widehat D_{\alpha(h)}(h)\\
&\leq\, \val_i^F\big(\widehat D_{\alpha(h)}^h\big) \,=\, \val_i^F(d^h),
\end{align*}
which proves (M.1) and (M.2) for $h$.

Finally, consider a history $h\in H$ at which $d$ re-initiates. By  \eqref{equ:initath}, \eqref{equ:lessthandelta}, \eqref{equ:m1},  \eqref{eqn.defined}, and since $\alpha(h) = h$,
\begin{align}
d(h) \,&<\, \Val_i^F(\Gamma^h) - \delta
\,<\, \Val_i^F(\Gamma^h) - \tfrac{\delta}{2}\nonumber\\
&\leq\, \widehat D_{h}(h) \,\leq\, \val^F_i(\widehat D_{h}^h)\,=\, \val^F_i(d^h)
,\label{equ:jump}
\end{align}
which proves (M.1) and (M.2) for $h$.\medskip

\noindent\textsc{Verifying (M.3).} Assume that a strategy profile with finite support $\sigma \in \Sigma^F$ satisfies \eqref{equ:pr-m3}. Take a history $g \in H$. We show that $\val^F_i(d^g) \leq \E_\sigma[f_i | g]$. 

Notice that \eqref{equ:pr-m3} and~(M.1) imply that $d(h) \leq \E_{\sigma(h)}[d^h]$ for each history $h\in H$, and thus the process $(d(h))_h$ is a submartingale under $\sigma$ (with respect to the usual filtration on $A^\dN$). Notice also that if $d$ re-initiates at a history $h\in H$, then by \eqref{equ:jump} and \eqref{equ:pr-m3} we have $d(h)+\tfrac{\delta}{2} \leq \val^F_i(d^h) \leq \E_{\sigma(h)}[d^h]$. This implies that under $\sigma$ the
expected number of re-initiations is bounded in any subgame. In particular
\[\prob_\sigma(\hbox{the number of re-initiations is finite }\mid g) = 1,\]
for every history $g$.
Fix $\rho > 0$, and let $T \in \dN$ be a sufficiently large stage such that
\[\prob_\sigma(\hbox{no re-initiation after stage } T \mid g)\, \geq\, 1-\rho. \]
This implies 
\[\prob_\sigma(\alpha(h^T(p)) = \alpha(h^{t}(p))\text{ for each }t\geq T\mid g)\, \geq\, 1-\rho.\] 
Hence, 
\begin{align*}
\val^F_i(d^g) \,&\leq\, \E_{\sigma(g)}[d^g]\\
\,&\leq\, \E_\sigma\Big[\limsup_{t \to \infty} d(h^t)\mid g \Big]\\
&\leq\, \E_\sigma\Big[\limsup_{t \to \infty} \widehat D_{\alpha(h^T)}(h^t) \mid g \Big] + \rho\\
&\leq\, \E_\sigma[f_i | g] + \rho,
\end{align*}
where the last inequality follows from \eqref{equ:pl1wins}, and thus~(M.3) holds as well.
\end{proof}

\begin{comment}
In the proof of Theorem~\ref{theorem:martin} the function $d$ is updated whenever its value drops too much, that is, whenever its continuation cannot serve to show that the value of the current subgame is close to $\Val_i^F(\Gamma)$.
This idea is a reminiscent of, e.g., \cite{rosenberg2000maxmin}, where to construct an optimal strategy one follows a discounted optimal strategy until a subgame is reached where the discounted value is far below the undiscounted value, and then switches to a discounted optimal strategy with a higher discount factor that is closer to the undiscounted value. 
\end{comment}

\subsection{Proof of Theorem~\ref{theorem:1}}
\label{section:prooff}

In this section we complete the proof of Theorem~\ref{theorem:1}.
By applying a proper affine transformation to the payoff function of each player~$i$ and adjusting $\ep_i$ accordingly, we can assume w.l.o.g.~that the range of $f_i$ is $[0,1]$.
The gist of the argument is as follows.
For each player $i \in I$, let $d_i : H \to [0,1]$ be a function satisfying the conditions of Theorem \ref{theorem:martin} with $\delta = \frac{\ep_i}{2}$, and fix arbitrarily some default action $\widehat{a}_i \in A_i$. 
We define, for each history $h \in H$, a one-shot game $G(h)$. This is a game with finitely many players: if $h$ is a stage $t$ history, then in $G(h)$ only the players $0, \ldots, t$ are active; the other players are assumed to be playing their respective default actions. The payoff to an active player $i$ is given by the function $d_i^h$. Nash equilibria of these one-shot games satisfy  \eqref{equ:pr-m3} for every player $i \in \{0,\dots,t\}$. This allows us to conclude that the corresponding strategy profile in the repeated game $\Gamma$ gives each player $i$, almost surely, a payoff of at least $\Val_i^F(\Gamma) - \tfrac{\ep_i}{2}$. This in turn implies that there exists a play $p$ with a payoff of at least $\Val_i^F(\Gamma) - \tfrac{\ep_i}{2}$ for each player $i \in I$. Such a play yields an $\vec{\ep}$-equilibrium in $\Gamma$.

We turn to the formal proof.
Fix $\vec{\ep} > 0$. For each player $i \in I$ fix an arbitrary action $\widehat a_i \in A_i$. Apply Theorem~\ref{theorem:martin} with $\delta = \tfrac{\ep_i}{2}$, and let $d_i : H \to [0,1]$ be a function satisfying the conditions (M.1)--(M.3).\medskip

\noindent\textbf{The one-shot game $G(h)$.} Fix a history $h$, and denote $t=\len(h)$. Let $G(h)$ be the one-shot game where:
\begin{itemize}
\item The set of players is $0,1,2,\dots,t$.
\item The action set of each player $i \in \{0,1,2,\dots,t\}$ is $A_i$.
\item The payoff function $r^h_i : \prod_{j=0}^t A_j \to [0,1]$ of each player $i \in \{0,1,2,\dots,t\}$ is given by
\[r^h_i(a_0, \ldots, a_t)\, =\, d_i^h(a_0, \ldots, a_t, \widehat{a}_{t+1}, \widehat{a}_{t+2}, \ldots).\]
\end{itemize}

By Nash \cite{nash1950equilibrium}, the one-shot game $G(h)$ has an equilibrium
$x(h) = (x_i(h))_{i=0}^t \in \prod_{i=0}^t X_i$. Since each player $i=0,1,2,\ldots,t$ is active in $G(h)$ and only finitely many of her opponents randomize, we have
\begin{equation}\label{ineq-Gh}
\E_{x(h)} [r_i^h]\, \geq\, \val_i^F(d^h)\quad\text{for each}\quad i \in \{0,\ldots,t\}.
\end{equation}

\noindent\textbf{The strategy profile $\sigma$ and the play $p$.} For $t \in \dN$ define the set $H_t^F \subseteq A^t$ as follows. Let $H_0^F$ consist of the empty history $\emptyset$ alone; and, for $t \in \dN$ let $H_{t+1}^F$ be the set of histories $(a^0,\ldots,a^{t}) \in A^{t+1}$ such that $a_{i}^{k} = \widehat{a}_{i}$ whenever $k \leq t$ and $k < i$. This is the set of histories that results if the inactive players stick to their default actions. Note that $H_t^F$ is finite for each $t \in \dN$. 
 
For each player $i \in I$, let $\sigma_i\in \Sigma_i$ be the following strategy: for each history $h\in A^t$ at stage $t$ define
\[\sigma_i(h) = \left\{
\begin{array}{cll}
x_i(h), & & \text{if }h \in H_t^F,\\
\widehat{a}_i, & \ \ \ \ \ & \text{otherwise}.
\end{array}
\right.\]
Note that the function thus defined \textit{is} a strategy, i.e., 
the map $h \mapsto \sigma_i(h)(a_i)$ is $\mathcal{F}(A^t)$-measurable for every $a_i \in A_i$. 
Indeed, this map coincides with the function $h \mapsto x_i(h)(a_i)$ on the finite set of histories $H_t^F$, and it is constant on $A^t \setminus H_t^F$. 

The resulting strategy profile $\sigma=(\sigma_i)_{i\in I}$ is an element of $\Sigma^F$.
Consider any player $i\in I$. For every history $h \in H$ with $\len(h)\geq i$, we have, by \eqref{ineq-Gh},
\[
\E_{\sigma(h)}[d_i^h] \,=\, \E_{x(h)} [r_i^h] \,\geq\, \val_i^F(d^h).
\]
By Theorems~\ref{theorem:martin} and~\ref{const-surr} we deduce that for every history $h \in H$ with $\len(h)\geq i$,
\[\E_\sigma[f_i | h] \,\geq\, \val_i^F(d^h) \,\geq\,\Val^F_i(\Gamma^h) - \tfrac{\ep_i}{2} \,=\, \Val_i^F(\Gamma) - \tfrac{\ep_i}{2}.\]
Since this holds for all histories beyond stage $i$, L\'evy's 0-1 law implies that
\[ \prob_{\sigma}\bigl(f_i \geq \Val_i^F(\Gamma) - \tfrac{\ep_i}{2}\bigr) \,=\, 1. \]
Since the set $I$ of players is countable, it follows that there exists a play $p\in A^\dN$ such that
\[ f_i(p) \,\geq\, \Val^F_i(\Gamma)-\tfrac{\ep_i}{2}, \quad\forall i\in I. \]
\textbf{The $\vec{\ep}$-equilibrium in $\Gamma$.} Based on the play $p$, the following grim-trigger strategy profile is an $\vec{\ep}$-equilibrium:
\begin{itemize}
\item The players follow the play $p$.
\item If some player $i$ deviates from $p$ at stage $t$,
then from stage $t+1$ and on all other players punish player~$i$ at her finitistic minmax value.
That is, they bring down player~$i$'s payoff to $\Val_i^F(\Gamma) + \tfrac{\ep_i}{2}$.
\end{itemize}

\section{Extensions and related results}\label{secn.extension}
\subsection{Extension to the Product-Discrete Sigma-Algebra}
Our main result, Theorem \ref{theorem:1}, assumes the payoff functions to be product-measurable. The result can in fact be extended to payoff functions that are measurable with respect to a larger sigma-algebra, obtained by endowing the set $A$ of action profiles with its discrete sigma-algebra. The extension, however, requires a more general concept of equilibrium. 

\begin{definition} 
The product-discrete sigma-algebra $\calF^{d}(A^\dN)$ over $A^\dN$ is the product sigma-algebra, where each copy of $A$ is endowed with the discrete sigma-algebra. 
\end{definition}

The product-discrete sigma-algebra is not an esoteric construct. In fact, Martin's celebrated results on Borel determinacy (\cite{martin1975borel}, \cite{martin1998determinacy}) apply to sets in $\calF^{d}(A^\dN)$.     

The product-discrete sigma-algebra $\calF^{d}(A^\dN)$ coincides with the product sigma-algebra $\calF(A^\dN)$ if and only if $A_i$ is finite for all but finitely many players $i$.
Consequently, the class of $\calF^{d}(A^\dN)$-measurable functions $f_i:A^\dN\to\dR$ is generally richer than the class of product-measurable functions. For example, any payoff function that only depends on the action profiles played at the first $t$ stages of the game is $\calF^{d}(A^\dN)$-measurable, but not necessarily product-measurable.

The product-discrete sigma-algebra $\calF^{d}(A^\dN)$ coincides with the Borel sigma-algebra on $A^\dN$ if $A$ is given its discrete topology and $A^\dN$ the corresponding product topology.

\begin{definition}
Consider a repeated game with infinitely many players. Assume that player $i$'s payoff function $f_i$ is bounded. For a strategy profile $\sigma$, we say that player $i$ has a well defined expected payoff under $\sigma$ if
\[\sup_{g\in M,\ g\leq f_i} \E_{\sigma}[g] \,=\,\inf_{g\in M,\ g\geq f_i} \E_{\sigma}[g],\] where $M$ denotes the set of all bounded and product-measurable functions from $A^\dN$ to $\dR$. In that case, this common quantity is denoted by $\E_{\sigma}[f_i]$.
\end{definition}

\begin{lemma}\label{welldef-exp} 
If $f_i$ is bounded and $\calF^{d}(A^\dN)$-measurable, then player $i$ has a well defined expected payoff under any strategy profile with finite support. 
\end{lemma}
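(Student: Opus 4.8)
The plan is to exploit the fact that, under a strategy profile $\sigma$ with finite support, the induced measure $\prob_\sigma$ is carried by a ``small'' product-measurable set of plays, on which the product-discrete sigma-algebra $\calF^{d}(A^\dN)$ and the product sigma-algebra $\calF(A^\dN)$ have the same trace. Once such a set is in hand, approximating $f_i$ from below and from above by product-measurable functions having the same $\sigma$-expectation is immediate, which is exactly what the definition of a well-defined expected payoff requires.

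First I would construct the reachable set. Fix $\sigma\in\Sigma^F$. For a history $h$, let $S_h$ be the finite set of players who randomize in $\sigma(h)$, let $\bar a^h_j\in A_j$ be the pure action of a player $j\notin S_h$, and put $F_h=\{a\in A:a_j=\bar a^h_j\ \text{for all}\ j\notin S_h\}$, a finite set containing the support of $\prob_{\sigma(h)}$. Define recursively the finite sets of histories $R_0=\{\emptyset\}$ and $R_{t+1}=\{(h,a):h\in R_t,\ a\in F_h\}$, and let $C=\{p\in A^\dN:h^t(p)\in R_t\ \text{for all}\ t\}$. Since each $R_t$ is finite and a singleton of $A$ is closed (hence Borel) in the product topology, $C\in\calF(A^\dN)$. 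The claim to establish is $\prob_\sigma(C)=1$. For $h\in R_t$ one decomposes the ``escape'' event as the countable union $\{h^t(p)=h,\ a^t\notin F_h\}=\bigcup_{j\notin S_h}\{h^t(p)=h,\ a^t_j\ne\bar a^h_j\}$, observes via the Ionescu--Tulcea construction that $\prob_\sigma(\{h^t(p)=h,\ a^t_j\ne\bar a^h_j\})=\prob_\sigma(h^t(p)=h)\cdot\sigma_j(h)(A_j\setminus\{\bar a^h_j\})=0$ because $j\notin S_h$, and then sums over the (countably many) players $j$ and the (finitely many) histories $h\in R_t$ to conclude $\prob_\sigma(h^{t+1}(p)\in R_{t+1})=\prob_\sigma(h^t(p)\in R_t)$. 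Induction on $t$ with $\prob_\sigma(h^0(p)\in R_0)=1$, together with continuity of the measure along the decreasing sequence, gives $\prob_\sigma(C)=1$.

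Next I would show that the two sigma-algebras agree on $C$. For $p\in C$ one has $a^t(p)\in E_t:=\bigcup_{h\in R_t}F_h$, a finite set; hence for every $B\subseteq A$, the set $\pi_t^{-1}(B)\cap C$ equals $\pi_t^{-1}(B\cap E_t)\cap C$, a finite union of sets $\{p:a^t(p)=a\}\cap C$, each product-measurable. The family $\calG=\{B\in\calF^{d}(A^\dN):B\cap C\in\calF(A^\dN)\}$ is a sigma-algebra, using $C\in\calF(A^\dN)$, and it contains the generators $\pi_t^{-1}(B)$ of $\calF^{d}(A^\dN)$; therefore $\calG=\calF^{d}(A^\dN)$, i.e.\ $B\cap C\in\calF(A^\dN)$ for every $B\in\calF^{d}(A^\dN)$. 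Finally, writing $c=\|f_i\|_\infty$, the function $g$ equal to $f_i$ on $C$ and to $0$ off $C$ is bounded and product-measurable by the previous line, hence lies in $M$; the functions $g^{-}$ and $g^{+}$ that agree with $g$ on $C$ and equal $-c$ and $c$ respectively off $C$ are also in $M$, satisfy $g^{-}\le f_i\le g^{+}$ pointwise on $A^\dN$, and $\E_\sigma[g^{-}]=\E_\sigma[g]=\E_\sigma[g^{+}]$ since $\prob_\sigma(A^\dN\setminus C)=0$. Together with the elementary inequality $\sup_{g'\in M,\,g'\le f_i}\E_\sigma[g']\le\inf_{g''\in M,\,g''\ge f_i}\E_\sigma[g'']$, this forces both sides to equal $\E_\sigma[g]$, which is the assertion.

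The step I expect to be the main obstacle is the claim $\prob_\sigma(C)=1$: the collection of zero-probability stage-$t$ histories may be uncountable, so one cannot argue by summing over it directly. The way around this is to transfer the countability onto the set of \emph{players} rather than the set of histories, decomposing each escape event into one null event per non-randomizing player and invoking countable subadditivity; this is precisely where the finite-support hypothesis is used. The remaining steps are routine: the sigma-algebra identity on $C$ rests only on the fact that a singleton of $A$ is closed in the product topology, and the concluding argument is a one-line sandwiching of $f_i$ between two product-measurable functions that are $\prob_\sigma$-a.e.\ equal.
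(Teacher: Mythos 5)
Your proof is correct and follows essentially the same route as the paper's: the paper restricts to the set $[H^F]$ of plays all of whose prefixes occur with positive probability under $\sigma$ (a full-measure, finitely branching set) and sandwiches $f_i$ between two product-measurable functions agreeing with it there, exactly as you do with your reachable set $C$. The only difference is that you spell out the two points the paper leaves implicit, namely that the carrier set has $\prob_\sigma$-measure $1$ (via countable subadditivity over the non-randomizing players) and that $\calF^{d}(A^\dN)$ traces to $\calF(A^\dN)$ on it.
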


\noindent\begin{proof}
Fix $\sigma\in\Sigma^F$,
and let $H^F$ be the set of all  histories that occur with positive probability under $\sigma$.
Let $[H^F]$ be the set of plays whose prefixes are in $H^F$. 
Clearly, $\prob_\sigma([H^F]) = 1$.

Assume w.l.o.g.~that $f_i(p) \in [0,1]$ for each $p \in A^{\dN}$. 
Let $g_* : A^\dN \to \dR$ be the function that agrees with $f_i$ on $[H^F]$ and is 0 on the complement of $[H^F]$.
Similarly, let $g^*$ be equal to $f_i$ on $[H^F]$ and to 1 on the complement of $[H^F]$. Then both $g_*$ and $g^*$ are in $M$, and $g_* \leq f_i \leq g^*$. Since $\E_\sigma[g_*] = \E_\sigma[g^*]$, we conclude that $f_i$ has a well defined expected payoff. 
\end{proof}

A strategy profile $\sigma^*$ is an $\vec\ep$-equilibrium if 
(a) it induces a well defined payoff,
(b) all individual deviations of the players induce well defined payoffs,
and (c) no player can profit more than $\ep_i$ by deviating.
Since in the proof of Theorem \ref{theorem:1} we only use strategy profiles with finite support, by Lemma \ref{welldef-exp} we obtain the following extension of Theorem \ref{theorem:1}.

\begin{theorem}
Consider a repeated game with countably many players. Assume that each player's payoff function is bounded, $\calF^d(A^\dN)$-measurable, and tail-measurable. Then for each $\vec \ep=(\ep_i)_{i\in I}>0$ the game admits a strategy profile $\sigma^*$ with the following properties for every player $i \in I$:
\begin{itemize}
\item For every strategy $\sigma_i \in \Sigma_i$, player $i$ has a well defined expected payoff under $(\sigma_i,\sigma^*_{-i})$. In particular, player $i$ has a well defined expected payoff under $\sigma^*$.
\item For every strategy $\sigma_i \in \Sigma_i$,
we have $\E_{\sigma^*}[f_i] \,\geq\, \E_{(\sigma_i,\sigma^*_{-i})}[f_i] - \ep_i$.
\end{itemize}
\end{theorem}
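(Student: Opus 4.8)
The plan is to reduce the $\calF^d(A^\dN)$-measurable case to the product-measurable case treated in Theorem~\ref{theorem:1} by observing that the entire construction in Section~\ref{section:prooff} only ever evaluates payoffs along strategy profiles with finite support. First I would note that the proof of Theorem~\ref{theorem:1} does not actually require product-measurability of the $f_i$ in full generality: what it uses is (i) that the auxiliary results Theorems~\ref{martin-for-surr}, \ref{theorem:martin}, and \ref{const-surr} hold, and (ii) that for the specific finite-support strategy profile $\sigma$ built from the one-shot equilibria $x(h)$, the expected payoffs $\E_\sigma[f_i\mid h]$ are well defined and obey L\'evy's $0$--$1$ law. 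By Lemma~\ref{welldef-exp}, point (ii) is taken care of: every $\calF^d(A^\dN)$-measurable bounded $f_i$ has a well-defined expected payoff under any $\sigma\in\Sigma^F$, sandwiched between $\E_\sigma[g_*]$ and $\E_\sigma[g^*]$ for product-measurable $g_*\le f_i\le g^*$ that agree with $f_i$ on the support $[H^F]$ of $\sigma$.

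Next I would handle the auxiliary results. For the finitistic minmax value $\Val_i^F(\Gamma)$ and $\Val_i^F(\Gamma^h)$ to make sense, one only needs expected payoffs against finite-support opponent profiles $\sigma_{-i}\in\Sigma_{-i}^F$; but here player~$i$ herself may use an arbitrary strategy, so $(\sigma_i,\sigma_{-i})$ need not have finite support, and $\E_{(\sigma_i,\sigma_{-i})}[f_i]$ need not be defined for $\calF^d(A^\dN)$-measurable $f_i$. The clean fix is to define $\Val_i^F(\Gamma)$ using the outer (or, equivalently by a sandwiching argument restricted to the relevant supports, the common) expectation, and to check that Theorems~\ref{martin-for-surr} and~\ref{theorem:martin} go through verbatim: their proofs invoke Martin's Borel determinacy theorem~\cite{martin1975borel}, which — as the excerpt explicitly notes — already applies to sets in $\calF^d(A^\dN)$. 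The function $d:H\to[0,1]$ produced is a genuine function on histories, its one-shot incarnations $d^h$ are functions on the finite-or-countable product $A$, and $\val_i^F(d^h)$ is defined for arbitrary bounded $d^h$; condition (c)/(M.3) only asserts pointwise limsup inequalities along plays, which are insensitive to measurability. Theorem~\ref{const-surr} likewise only uses tail-measurability and the definition of $\Val_i^F$.

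Then I would run the argument of Section~\ref{section:prooff} unchanged: for each $i$ take $d_i$ from Theorem~\ref{theorem:martin} with $\delta=\ep_i/2$, form the finite one-shot games $G(h)$ whose payoffs $r_i^h$ are restrictions of $d_i^h$, take Nash equilibria $x(h)$, and assemble the finite-support profile $\sigma$. Because $\sigma\in\Sigma^F$, Lemma~\ref{welldef-exp} gives well-defined $\E_\sigma[f_i\mid h]$ for every $h$, and the submartingale/re-initiation bookkeeping of the proof of Theorem~\ref{theorem:martin}, combined with~\eqref{equ:pl1wins}, yields $\E_\sigma[f_i\mid h]\ge \Val_i^F(\Gamma)-\ep_i/2$ for all $h$ with $\len(h)\ge i$ — the only subtlety being that the $\limsup$ inequalities now hold $\prob_\sigma$-a.s.\ on $[H^F]$, which suffices since $\prob_\sigma([H^F])=1$. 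L\'evy's $0$--$1$ law (applied to the product-measurable sandwich functions on the support of $\sigma$) then forces $\prob_\sigma(f_i\ge\Val_i^F(\Gamma)-\ep_i/2)=1$, and countability of $I$ extracts a single play $p$ with $f_i(p)\ge\Val_i^F(\Gamma)-\ep_i/2$ for all $i$. The grim-trigger profile built on $p$ is the desired $\sigma^*$: on the equilibrium path it is the pure play $p$, which trivially has a well-defined payoff; after any single deviation by player~$i$ at stage~$t$, the punishment profile is finite-support (it is a finitistic-minmax-defending profile for player~$i$ against the deviator, with all but finitely many punishers on default actions), so by Lemma~\ref{welldef-exp} the deviation payoff $\E_{(\sigma_i,\sigma^*_{-i})}[f_i]$ is well defined and, by the defining property of $\Val_i^F$, at most $\Val_i^F(\Gamma)+\ep_i/2\le f_i(p)+\ep_i$.

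The main obstacle I anticipate is purely bookkeeping about \emph{which} expectations are a~priori well defined: in $\Val_i^F$ the deviator's strategy is unrestricted, so one must be careful to phrase the minmax value via outer expectation (or restrict to the common value where it exists) and verify that every expectation actually invoked in the proof — namely those under finite-support profiles, and those under $(\sigma_i,\sigma^*_{-i})$ with $\sigma^*_{-i}$ a finite-support punishment profile — falls under Lemma~\ref{welldef-exp}. Once one checks that no step ever needs $\E_{\sigma}[f_i]$ for a $\sigma\notin\Sigma^F$, the rest is a transcription of the existing proof; the conditions (a)--(c) in the statement are then immediate from the construction.
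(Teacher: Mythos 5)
Your proposal is correct and follows essentially the same route as the paper, whose proof of this theorem is precisely the observation that the construction for Theorem~\ref{theorem:1} only ever evaluates payoffs under strategy profiles with finite support (including unilateral deviations from them), so Lemma~\ref{welldef-exp} supplies all the needed well-defined expectations, with Martin's determinacy already applicable to $\calF^{d}(A^\dN)$. The one subtlety you flag is actually vacuous: if $\sigma_{-i}\in\Sigma_{-i}^F$ and $\sigma_i\in\Sigma_i$ is arbitrary, then $(\sigma_i,\sigma_{-i})$ still lies in $\Sigma^F$ (only one extra player can randomize), so the finitistic minmax value and all deviation payoffs are covered directly by Lemma~\ref{welldef-exp} and no outer-expectation device is needed.
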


\subsection{Two notions of minmax in one-shot games}
We show that in a one-shot game, if player $i$'s payoff function is product-measurable, then player $i$'s finitistic minmax value equals her classical minmax value. This implies that the former is an extension of the latter to a larger class of payoff functions. 

\begin{theorem}\label{minmax-oneshot} Let
$G$ be a one-shot game as in Section~\ref{sec3}.\ref{sec:minmax}. Assume that player $i$'s payoff function $r_i$ is bounded and $\mathcal{F}(A)$-measurable. Then
\[\val_i(r_i)\,=\,\val_i^F(r_i).\]
\end{theorem}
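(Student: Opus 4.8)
The plan is to prove the two inequalities separately. One direction is immediate: since $\Sigma_{-i}^F \subseteq \Sigma_{-i}$ in the one-shot setting (i.e.\ $X_{-i}^F \subseteq X_{-i}$), restricting the infimum in \eqref{val-oneshot} to profiles with finite support can only increase it, so $\val_i(r_i) \le \val_i^F(r_i)$. The content of the theorem is the reverse inequality: for every $\eta > 0$ there is a mixed action profile $x_{-i} \in X_{-i}^F$ with finite support such that $\max_{a_i \in A_i} r_i(a_i, x_{-i}) \le \val_i(r_i) + \eta$. So fix an arbitrary $x_{-i}^* \in X_{-i}$ with $\max_{a_i} r_i(a_i, x_{-i}^*) \le \val_i(r_i) + \eta/2$, and aim to approximate the finitely many numbers $r_i(a_i, x_{-i}^*)$, $a_i \in A_i$, by $r_i(a_i, x_{-i})$ for some finitely supported $x_{-i}$.

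The key step is a truncation-plus-approximation argument exploiting product-measurability of $r_i$. First I would observe that for each fixed $a_i$, the function $r_i(a_i, \cdot)$ on $A_{-i} = \prod_{j \neq i} A_j$ is bounded and measurable with respect to the product $\sigma$-algebra, hence (by a standard monotone-class / approximation fact) can be approximated in $L^1(\prob_{x_{-i}^*})$ by a function depending on only finitely many coordinates — i.e.\ for any $\gamma > 0$ there is a finite set $J \subseteq I \setminus \{i\}$ and a $\mathcal{F}(\prod_{j \in J} A_j)$-measurable $g_{a_i}$ with $\E_{x_{-i}^*}|r_i(a_i,\cdot) - g_{a_i}| < \gamma$. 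Taking $J$ large enough to work simultaneously for all $a_i \in A_i$ (a finite set), I would then define $x_{-i}$ to agree with $x_{-i}^*$ on the coordinates in $J$ and to play a fixed pure action $\widehat a_j$ on all coordinates $j \notin J$; this $x_{-i}$ has finite support. Since the approximating functions $g_{a_i}$ depend only on the $J$-coordinates, $\E_{x_{-i}}[g_{a_i}] = \E_{x_{-i}^*}[g_{a_i}]$, and combining with the two $L^1$ bounds gives $|r_i(a_i, x_{-i}) - r_i(a_i, x_{-i}^*)| < 2\gamma$ for every $a_i$. Choosing $\gamma = \eta/4$ then yields $\max_{a_i} r_i(a_i, x_{-i}) \le \val_i(r_i) + \eta$, as desired.

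The main obstacle — and the step requiring care — is the finite-coordinate approximation of a product-measurable function, because the product $\sigma$-algebra on an infinite product is generated by cylinder sets but is strictly larger than the union of the finite-dimensional $\sigma$-algebras. The clean way around this is: the collection of bounded functions that can be $L^1(\prob_{x_{-i}^*})$-approximated by finite-coordinate functions is a vector space closed under bounded monotone limits and contains the indicators of all finite cylinders; by the functional monotone class theorem it therefore contains every bounded function measurable with respect to the generated $\sigma$-algebra, i.e.\ all bounded product-measurable functions. (Equivalently, one can invoke martingale convergence: $\E_{x_{-i}^*}[r_i(a_i,\cdot)\mid \mathcal{F}_J] \to r_i(a_i,\cdot)$ in $L^1$ as $J \uparrow I\setminus\{i\}$ along an increasing sequence, by Lévy's upward theorem.) Everything else is bookkeeping: intersecting finitely many finite sets $J$, and tracking the $\varepsilon$'s.
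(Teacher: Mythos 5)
There is a genuine gap at the step where you transfer the approximation to the modified profile. You have an $L^1$ bound only with respect to $\prob_{x_{-i}^*}$, namely $\E_{x_{-i}^*}|r_i(a_i,\cdot)-g_{a_i}|<\gamma$, but the conclusion $|r_i(a_i,x_{-i})-r_i(a_i,x_{-i}^*)|<2\gamma$ also needs the bound $|\E_{x_{-i}}[r_i(a_i,\cdot)]-\E_{x_{-i}}[g_{a_i}]|<\gamma$, i.e.\ an $L^1$ bound under the \emph{new} measure $\prob_{x_{-i}}$, in which the coordinates outside $J$ are frozen at the arbitrary default actions $\widehat a_j$. That measure is typically singular with respect to $\prob_{x_{-i}^*}$ on the off-$J$ coordinates (any fixed pure tail has $\prob_{x_{-i}^*}$-probability zero when infinitely many opponents randomize), so the $L^1(\prob_{x_{-i}^*})$ bound says nothing about it. Concretely, let player $i$ be a dummy, let each opponent have actions $\{0,1\}$, let $r_i$ be the indicator of the (product-measurable) event that infinitely many opponents play $1$, let $x_{-i}^*$ be i.i.d.\ uniform, and let $\widehat a_j=0$ for all $j$. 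Then $g_{a_i}=\E_{x_{-i}^*}[r_i\mid\mathcal{F}_J]\equiv 1$ gives $\gamma=0$ in your $L^1$ bound, yet $r_i(a_i,x_{-i})=0$ while $r_i(a_i,x_{-i}^*)=1$ for every finite $J$; so the intermediate claim on which your proof rests is false. (The theorem itself is unharmed here, since the all-$0$ pure profile is finitely supported, but your argument does not establish it.)

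The monotone-class/L\'evy part of your proposal is fine; what fails is replacing the off-$J$ coordinates by an \emph{arbitrary} fixed pure action. The repair is essentially the paper's proof: take the pure actions outside the finite set to be a $\prob_{x_{-i}^*}$-typical realization $a_{-i}$. Defining $\xi_n^{a_i}(a_{-i})$ as the expected payoff when the first $n$ opponents randomize according to $x_{-i}^*$ and the remaining opponents play the coordinates of $a_{-i}$, one gets a reverse martingale (with respect to the decreasing tail filtration) whose limit is tail-measurable, hence $\prob_{x_{-i}^*}$-a.s.\ constant and equal to $\E_{a_i,x_{-i}^*}[r_i]$ by Kolmogorov's 0-1 law; thus for almost every $a_{-i}$ there is an $n$ (depending on $a_{-i}$) such that the finitely supported profile $(x_1,\dots,x_n,a_{n+1},a_{n+2},\dots)$ approximates the payoff within $\ep$ simultaneously for all $a_i\in A_i$. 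Your argument would go through once you substitute this choice of the pure tail (or an equivalent L\'evy 0-1 law argument) for the fixed default actions; as written, it does not.
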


Let us assume for concreteness that $i = 0$. Fix a bounded product-measurable function $r = r_0 : A \to \dR$, an $\varepsilon > 0$, and a strategy profile $x_{-0} \in X_{-0}$ of $0$'s opponents. Consider an action profile $a_{-0} = (a_{1},a_{2},\ldots) \in A_{1} \times A_{2} \times \cdots = A_{-0}$ of 0's opponents and an $n \in \dN$. Consider also the profile $(x_{1}, \ldots, x_{n}, a_{n+1}, a_{n+2}, \ldots) \in X_{-0}^F$ that requires the first $n$ opponents of player $0$ to play as in the profile $x_{-0}$, and the others to play instead their pure action in $a_{-0}$. We say that $a_{-0}$ is an $n$-\textit{finitistic approximation} for $(r, \varepsilon, x_{-0})$ if 
\[\left|\E_{a_{0},x_{1}, \ldots, x_{n}, a_{n+1}, a_{n+2}, \ldots}[r] - \E_{a_0, x_{-0}}[r]\right| \leq \ep,\]
for each $a_0 \in A_0$. We say that $a_{-0}$ is a \textit{finitistic approximation} for $(r, \varepsilon, x_{-0})$ if there is an $n \in \dN$ such that $a_{-0}$ is an $n$-finitistic approximation for $(r, \varepsilon, x_{-0})$.

To establish the theorem, it suffices to show that there is a finitistic approximation for $(r, \varepsilon, x_{-0})$. As a matter of fact, the lemma below says that there are plenty. 

\begin{lemma}\label{lemma:minmax-oneshot}
Let $r :  A \to \dR$ be a bounded Borel measurable function, $\varepsilon > 0$, and $x_{-0} \in X_{-0}$. The set of finitistic approximations for $(r, \varepsilon, x_{-0})$ has $\prob_{x_{-0}}$-measure $1$. 
\end{lemma}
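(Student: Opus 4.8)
The plan is to show that $\prob_{x_{-0}}$-almost every action profile $a_{-0}=(a_1,a_2,\ldots)$ is an $n$-finitistic approximation for some $n$, by exploiting the martingale structure that the finite-dimensional conditional expectations of $r$ form. First I would fix an arbitrary pure action $a_0\in A_0$ and consider the bounded product-measurable function $a_{-0}\mapsto r(a_0,a_{-0})$ on $A_{-0}$, equipped with the probability measure $\prob_{x_{-0}}=\bigotimes_{j\ge 1}x_j$. Let $\calF_n$ be the sigma-algebra on $A_{-0}$ generated by the first $n$ coordinates, and let $\calF_\infty=\bigvee_n\calF_n$ be the full product sigma-algebra. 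The key observation is that
\[
\E_{a_0,x_1,\ldots,x_n,a_{n+1},a_{n+2},\ldots}[r]\,=\,\E_{x_{-0}}\!\bigl[\,r(a_0,\cdot)\,\bigm|\,\calF_n^{c}\,\bigr](a_{-0}),
\]
where $\calF_n^c$ is the sigma-algebra generated by coordinates $n+1,n+2,\ldots$; that is, integrating out the first $n$ coordinates with respect to $x_1,\ldots,x_n$ while holding the tail fixed at $a_{-0}$ is exactly a conditional expectation given the tail coordinates. Since $\bigcap_n\calF_n^c$ is the tail sigma-algebra of a product measure, which is $\prob_{x_{-0}}$-trivial by Kolmogorov's $0$–$1$ law, the reverse (backward) martingale convergence theorem gives
\[
\E_{x_{-0}}\!\bigl[\,r(a_0,\cdot)\,\bigm|\,\calF_n^c\,\bigr]\ \longrightarrow\ \E_{x_{-0}}[r(a_0,\cdot)]\,=\,\E_{a_0,x_{-0}}[r]
\]
both $\prob_{x_{-0}}$-almost surely and in $L^1$ as $n\to\infty$.

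From here the argument is routine bookkeeping. Almost sure convergence for the fixed $a_0$ means that for $\prob_{x_{-0}}$-almost every $a_{-0}$ there is an $n(a_0,a_{-0})$ beyond which $\bigl|\E_{a_0,x_1,\ldots,x_n,a_{n+1},\ldots}[r]-\E_{a_0,x_{-0}}[r]\bigr|\le\ep$. Since $A_0$ is \emph{finite}, I take the (finite) maximum over $a_0\in A_0$ of these thresholds to obtain a single $n$ that works simultaneously for all $a_0\in A_0$ on a set of full measure; intersecting the finitely many full-measure sets (one per $a_0$) still has measure $1$. On that set, $a_{-0}$ is by definition an $n$-finitistic approximation for $(r,\ep,x_{-0})$, hence a finitistic approximation, which is precisely the claim.

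The main obstacle — really the only subtlety — is identifying the finitistic partial expectation with a genuine conditional expectation so that a martingale convergence theorem applies. This requires care because we are holding the tail coordinates $a_{n+1},a_{n+2},\ldots$ fixed and integrating out the \emph{initial} block, so the relevant filtration is \emph{decreasing} ($\calF_1^c\supseteq\calF_2^c\supseteq\cdots$) and one must invoke the reverse martingale theorem together with the triviality of the tail field under a product measure; one should also check the integrability/measurability needed for Fubini when writing $\E_{a_0,x_1,\ldots,x_n,a_{n+1},\ldots}[r]$ as a function of $a_{-0}$, which is immediate from product-measurability and boundedness of $r$. Once this identification is in place, finiteness of $A_0$ disposes of the uniformity over $a_0$ and the lemma follows.
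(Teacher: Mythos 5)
Your proposal is correct and follows essentially the same route as the paper: it identifies the finitistic partial expectation $\E_{a_0,x_1,\ldots,x_n,a_{n+1},\ldots}[r]$ with the conditional expectation given the coordinates beyond $n$, applies reverse martingale convergence along the decreasing filtration, uses Kolmogorov's 0--1 law to identify the limit as the constant $\E_{a_0,x_{-0}}[r]$, and then uses finiteness of $A_0$ to get uniformity over $a_0$. This matches the paper's argument step for step.
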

\begin{proof}
For $n \in \dN$, let $E_{n}$ be the set of $n$-finitistic approximations for $(r, \varepsilon, x_{-0})$, and $E = \bigcup_{n \in \dN} E_{n}$ the set of finitistic approximations for $(r, \varepsilon, x_{-0})$. 

Let $\mathcal{E}_{n}$ be the sigma-algebra on $A_{-0}$ generated by the random variables $\{a_{n+1},a_{n+2},\ldots\}$. Then $\mathcal{E} = \bigcap_{n \in \mathbb{N}}\mathcal{E}_{n}$ is the tail sigma-algebra on $A_{-0}$. Given $a_0 \in A_0$, define the process $\xi_0^{a_0}, \xi_1^{a_0}, \ldots$ on $A_{-0}$ by $\xi_{0}^{a_0}(a_{-0}) = r(a_{0}, a_{-0})$, and for $n \geq 1$ by \[\xi_{n}^{a_0}(a_{-0}) = \E_{a_{0},x_{1}, \ldots, x_{n}, a_{n+1}, a_{n+2}, \ldots}[r].\] 

For $n \in \dN$, the random variable $\xi_{n}^{a_0}$ is $\mathcal{E}_{n}$-measurable. Moreover, the process $\{\xi_{n}\}_{n \in \mathbb{N}}$ is a reverse martingale with respect to the sequence  $\{\mathcal{E}_{n}\}_{n \in \mathbb{N}}$ of filtrations and the measure $\prob_{x_{-0}}$. That is to say, $\mathbb{E}_{x_{-0}}[\xi_{n}^{a_0} \mid \mathcal{E}_{n+1}] = \xi_{n+1}^{a_0}$. By Durrett \cite{Durrett11probability} (Theorems 5.6.1 and 5.6.2), the sequence $\xi_{n}^{a_0}$ converges to an $\mathcal{E}$-measurable limit $\xi_{\infty}^{a_0}$, almost surely with respect to $\prob_{x_{-0}}$, and $\E_{x_{-0}}[\xi_{\infty}^{a_0}] = \E_{x_{-0}}[\xi_0^{a_0}] = \E_{a_{0}, x_{-0}}[r]$. Since $\prob_{x_{-0}}$ is a product measure, Kolmogorov's 0-1 law implies that $\xi_{\infty}^{a_{0}}$ is constant $\prob_{x_{-0}}$-almost surely. Hence $\prob_{x_{-0}}$-almost surely, $\xi_{n}^{a_0}$ converges to $\E_{a_0,x_{-0}}[r]$. Since the set $A_{0}$ is finite, $E$ has $\prob_{x_{-0}}$-measure 1. 
\end{proof}

One can prove Theorem~\ref{minmax-oneshot} without using reverse martingales, relying instead on Levy's 0-1 law. 

\section{Discussion}\label{sec4}

\textbf{The set of equilibrium payoffs.} Let $\cal W$ denote the set of equilibrium payoffs, that is, accumulation points of $\ep$-equilibrium payoffs
as $\ep$ goes to 0. The grim-trigger strategy designed in the proof of Theorem~\ref{theorem:1} shows that
if $p$ is a play such that $f_i(p) \geq \Val_i^F(\Gamma)$ for each player $i\in I$,
then $f(p)=(f_i(p))_{i\in I}$ belongs to $\cal W$.

Using jointly controlled lotteries (see, e.g., Aumann and Maschler \cite{aumann1995repeated}),
we can deduce that under our assumptions, $\cal W$ includes the convex hull of the set of feasible and finitistic individually rational payoffs:
\[
R\,=\,\text{conv}\left\{f(p) \colon \,p\in A^\dN,\,f_i(p) \geq \Val_i^F(\Gamma)\ \,\forall i \in I\right\},
\]
where $\text{conv}$ refers to all convex combinations of countably many points of the set.
However, $\cal W$ may contain points outside the set $R$.
It would be interesting to characterize the set $\cal W$.\medskip

\noindent\textbf{Construction of equilibrium based on continuation payoffs.} The construction of an $\ep$-equilibrium in repeated games involved
the determination of fictitious continuation payoffs,
which were represented by the function $d$, based on the results of Martin \cite{martin1998determinacy}.
The quantity $d_i^h$ approximates player~$i$'s finitistic minmax value at the history $h$,
and as soon as at each history $h$ the active players play
a Nash equilibrium in the one-shot game whose payoff function is given by $d^h = (d^h_i)_{i \in I}$,
all of them are guaranteed to obtain at least their finitistic minmax value.

The relation between our result and Martin's \cite{martin1998determinacy} technique for zero-sum games,
is similar to the relation between Solan and Vieille's \cite{solan2002correlated} proof
for the existence of an extensive-form correlated equilibrium in stochastic games with finitely many players
and Mertens and Neyman's \cite{mertens1981stochastic,mertens1982stochastic}) study of zero-sum stochastic games.
Mertens and Neyman proved that to play optimally in a zero-sum stochastic game,
the player can play, at each history,
a discounted-optimal strategy in a one-shot game,
where the continuation payoff is given by the discounted value of the continuation game,
and the discount factor changes dynamically along the play as a function of past performance.
To prove the existence of an extensive-form correlated equilibrium in stochastic games with finitely many players,
Solan and Vieille \cite{solan2002correlated} defined a collection of one-shot discounted games,
where the discount factor of each player,
as well as her payoff functions,
are the ones given by Mertens and Neyman \cite{mertens1981stochastic,mertens1982stochastic}) for that player.
The tail-measurability of the payoff function in our model allows us
to deduce the existence of an $\ep$-equilibrium,
rather than of an extensive-form correlated equilibrium.
When the payoff function is product-measurable, and not necessarily tail-measurable,
our technique proves the existence of an extensive-form correlated equilibrium in repeated games
with finitely many players,
which provides an alternative and simpler proof than the one given by Mashiah-Yakovi \cite{mashiah2015correlated}.
\medskip

\noindent\textbf{Tail-measurability.} In our proof, the tail-measurability of the payoff functions was used for two arguments.
First, in Theorem~\ref{const-surr} to conclude that the finitistic minmax value of a player in a subgame is independent of the history.
Second, in the proof of Theorem \ref{theorem:1} to ensure that each player~$i$ can play arbitrarily during the first $i$ stages without affecting her overall payoff.
As Voorneveld's \cite{voorneveld2010possibility} example shows, when the payoffs are not tail-measurable,
an $\ep$-equilibrium need not exist for every $\ep > 0$.
It will be interesting to identify other families of payoff functions for which an $\ep$-equilibrium exists for every $\ep > 0$.

\end{document}